\documentclass[reqno]{amsart}

\pdfoutput=1

\usepackage{amsmath,amsthm}
\usepackage{amssymb}
\usepackage{enumitem}
\usepackage{changepage}
\usepackage[dvipsnames]{xcolor}
\usepackage[colorlinks = true, citecolor = Green]{hyperref}

\theoremstyle{plain}
\newtheorem{thm}{Theorem}[section]

\newtheorem{lemma}[thm]{Lemma}

\theoremstyle{definition}
\newtheorem{defn}[thm]{Definition}
\newtheorem{alg}[thm]{Algorithm}

\newcommand{\1}{\mathbf{1}}
\newcommand{\e}{\varepsilon}
\newcommand\mubar{{\overline{\mu}}}
\newcommand{\D}{\mathcal{D}}
\newcommand{\R}{\mathbf{R}}
\newcommand{\X}{\mathcal{X}}
\newcommand{\Z}{\mathbf{Z}}
\newcommand{\TV}{\mathrm{TV}}

\DeclareMathOperator{\E}{\mathbf{E}}
\DeclareMathOperator{\Prob}{\mathbf{P}}

\title[Quantitative convergence via strong random times]{Quantitative convergence rates for reversible Markov chains via strong random times}

\author{Daniel C. Jerison}
\address{Department of Mathematical Sciences, Tel Aviv University, Tel Aviv 69978, Israel}
\email{jerison@math.tau.ac.il, dcjerison@gmail.com}

\date{August 18, 2019}

\begin{document}

\begin{abstract}
Let $(X_t)$ be a discrete time Markov chain on a general state space. It is well-known \cite{NT82,MT93,RR04} that if $(X_t)$ is aperiodic and satisfies a \emph{drift and minorization} condition, then it converges to its stationary distribution $\pi$ at an exponential rate. We consider the problem of computing upper bounds for the distance from stationarity in terms of the drift and minorization data.

Baxendale \cite{B05} showed that these bounds improve significantly if one assumes that $(X_t)$ is reversible with nonnegative eigenvalues (i.e.\ its transition kernel is a self-adjoint operator on $L^2(\pi)$ with spectrum contained in $[0,1]$). We identify this phenomenon as a special case of a general principle: for a reversible chain with nonnegative eigenvalues, any \emph{strong random time} gives direct control over the convergence rate. We formulate this principle precisely and deduce from it a stronger version of Baxendale's result.

Our approach is fully quantitative and allows us to convert drift and minorization data into explicit convergence bounds. We show that these bounds are tighter than those of \cite{R95,B05} when applied to a well-studied example.
\end{abstract}

\maketitle

\section{Introduction}
\label{sec:introduction}

\subsection{Overview}
\label{sec:overview}

This paper considers the problem of computing bounds for the exponential convergence of discrete time Markov chains on general state spaces. The problem arises in Markov chain Monte Carlo estimation (MCMC), in which one takes an approximate sample from a computationally intractable probability distribution $\pi$ by devising a Markov chain with $\pi$ as its stationary distribution and then running the chain until it has mixed.

Given a Markov transition kernel $P(x,dy)$ on the state space $\X$ with stationary distribution $\pi$, we aim to prove an explicit bound of the form
\begin{equation}
\label{eq:geom-ergodic}
\|P^t(x,\cdot) - \pi\|_\TV \leq F(x,t) \rho^t \qquad \text{for all $x \in \X$ and all integers $t \geq 0$,}
\end{equation}
where $F(x,t)$ is a polynomial in $t$ and $\rho < 1$. The notation $P^t(x,\cdot)$ is the law of the Markov chain started from $x$ after $t$ steps, and the total variation distance between two probability measures $\mu,\mu'$ on $\X$ is defined by
\[
\|\mu - \mu'\|_\TV = \sup_{S \subset \X} |\mu(S) - \mu'(S)|.
\]
(Here and throughout the paper, we restrict our attention to measurable subsets of $\X$.) A Markov chain satisfying \eqref{eq:geom-ergodic} is called \emph{geometrically ergodic}.

One of the most widely used techniques both to prove geometric ergodicity and to obtain formulas for $F,\rho$ is the method of \emph{drift and minorization}. As we discuss in Section \ref{sec:drift-minorization}, this method works by constructing a \emph{strong random time} for the Markov chain. Recall \cite{N84} that a randomized stopping time for a discrete time Markov chain $(X_t)_{t \geq 0}$ is a random time $T$ such that for each $n \geq 0$, the event $\{T = n\}$ is allowed to depend on the history $(X_0,\ldots,X_n)$ and additional randomness that plays no role in the trajectory of the chain after time $n$. In other words, given $(X_0,\ldots,X_n)$, the event $\{T = n\}$ and the future trajectory $(X_{n+1},X_{n+2},\ldots)$ must be conditionally independent.

\begin{defn}
\label{defn:strongrandom}
Let $(X_t)$ be a discrete time Markov chain on the state space $\X$, and fix a probability measure $\nu$ on $\X$. A \textbf{strong random time} for $(X_t)$ with measure $\nu$ is a randomized stopping time $T$ such that for every probability measure $\mu$ on $\X$,
\[
\Prob_\mu(X_n \in S \mid T = n) = \nu(S) \qquad \text{for all $n \geq 0$, $S \subset \X$.}
\]
(By $\Prob_\mu$ we mean the probability for the chain $(X_t)$ started from $X_0 \sim \mu$. We follow the convention that an equality of this form is trivially satisfied when the event being conditioned on has measure zero.)
\end{defn}

The main theoretical result in this paper, Theorem \ref{thm:strongrandom}, says that a strong random time $T$ for a reversible Markov chain with nonnegative eigenvalues directly controls its distance from stationarity. When the tail of the law of $T$ decays exponentially, the chain must converge at the same exponential rate (or faster). Many chains used in MCMC estimation are reversible, including Metropolis--Hastings chains and random scan Gibbs samplers \cite{RR04}. Often the eigenvalues are automatically nonnegative; if not, one can make them so by passing to a lazy version of the chain.

We use Theorem \ref{thm:strongrandom} to provide formulas for $F,\rho$ in \eqref{eq:geom-ergodic} for reversible Markov chains with nonnegative eigenvalues that satisfy a drift and minorization condition. These formulas are given in Theorem \ref{thm:TV-drift-minorize}. Theorem \ref{thm:V-norm} states the analogous bounds for $V$-norm convergence, which is stronger than convergence in total variation. For convenience, the statement of Theorem \ref{thm:V-norm} puts in one place all the steps to extract explicit convergence bounds in both total variation and $V$-norm from the drift and minorization data. In Section \ref{sec:example} we show that Theorems \ref{thm:TV-drift-minorize} and \ref{thm:V-norm} yield better numerical bounds than formulas of \cite{R95,B05} when applied to an example.

For the experienced reader, here is a brief comparison between our method and other approaches to the same problem. We assume that the Markov chain has a univariate rather than bivariate drift function, and we allow the small set to violate conditions like \cite[Proposition 11]{RR04} under which a bivariate drift function could easily be constructed. In this situation, Baxendale \cite{B05} has shown that the extra assumption of reversibility with nonnegative eigenvalues leads to substantially better convergence bounds. We derive this result as a corollary of Theorem \ref{thm:strongrandom}. While the proof in \cite{B05} requires a small set with $1$-step minorization, our probabilistic approach just as easily handles the general case of $m$-step minorization with $m > 1$. In addition, our bounds are somewhat sharper than those of \cite{B05} in the case $m = 1$ where both results apply. See the discussion in Section \ref{sec:related}.

The rest of Section \ref{sec:introduction} is organized as follows. Sections \ref{sec:intro-strongrandom}--\ref{sec:drift-minorization} state our main theorems on $L^2$ and total variation convergence. Section \ref{sec:related} describes related work. Sections \ref{sec:types-random}--\ref{sec:nonreversible} discuss the broader context surrounding our results. Finally, Section \ref{sec:outline} outlines the remainder of the paper.

\subsection*{Acknowledgements}

The results in this paper appeared first in my Ph.D. thesis \cite{J16}. Many thanks are due to my advisor, Persi Diaconis, for introducing me to the topic and shepherding my progress. I am supported by a Zuckerman Postdoctoral Scholarship.

\subsection{Convergence from strong random times}
\label{sec:intro-strongrandom}

We first introduce some standard notation. For background, see \cite{R84} (and \cite{RR97} regarding the space $L^2(\pi)$).

A Markov transition kernel $P(x,dy)$ on $\X$ acts on functions $f$ and measures $\mu$ respectively by
\[
Pf(x) = \int_\X f(y) P(x,dy), \qquad \mu P(S) = \int_\X P(x,S) \mu(dx).
\]
A \emph{stationary distribution} for $P$ is a probability measure $\pi$ such that $\pi P = \pi$. If $\mu$ is a measure on $\X$ and $f$ is a function on $\X$, we write
\[
\mu(f) = \int_\X f(x)\mu(dx).
\]

Let $\pi$ be a probability measure on $\X$ and consider the space $L^2(\pi)$ of real-valued functions on $\X$ having finite norm with respect to the inner product
\[
\langle f,g \rangle_\pi = \int_\X f(x)g(x)\pi(dx).
\]
The Markov chain $(X_t)$ is reversible with respect to $\pi$ if its transition kernel $P(x,dy)$ has the property that $\pi(dx)P(x,dy) = \pi(dy)P(y,dx)$ as measures on $\X \times \X$. This implies that $\pi P = \pi$. If we consider $P$ as an operator on $L^2(\pi)$ by $f \mapsto Pf$, then $(X_t)$ is reversible if and only if $P$ is self-adjoint. In that case, the $L^2(\pi)$ spectrum of $P$ is a subset of the interval $[-1,1]$. We say that $P$ (and $(X_t)$) have nonnegative eigenvalues if the spectrum is a subset of $[0,1]$.

The $L^2(\pi)$ distance between two probability measures $\mu,\mu'$ on $\X$ is defined as the $L^2(\pi)$ norm of the Radon--Nikodym derivative of $\mu - \mu'$ with respect to $\pi$, if $\mu - \mu'$ is absolutely continuous with respect to $\pi$, and $+\infty$ otherwise:
\[
\| \mu - \mu' \|_{L^2(\pi)}^2 = \int_\X \left[ \frac{d(\mu - \mu')}{d\pi}(x) \right]^2 \pi(dx).
\]
It is not hard to show that $\|\mu - \mu'\|_\TV \leq \frac{1}{2} \|\mu - \mu'\|_{L^2(\pi)}$.

\begin{thm}
\label{thm:strongrandom}
Let $(X_t)$ be a Markov chain on $\X$ with transition kernel $P$. Assume that $(X_t)$ is reversible with respect to the probability measure $\pi$ and that it has nonnegative eigenvalues. Suppose that $T$ is a strong random time for $(X_t)$ with measure $\nu$ such that $\Prob_\nu(T \geq 1) = 1$ and $\E_\nu[T] < \infty$. Also assume that $\Prob_\mu(T < \infty) = 1$ for all probability measures $\mu$ on $\X$. Then $\pi$ is the unique stationary distribution for $(X_t)$, and for all $t \geq 0$,
\[
\|P^t(\nu,\cdot) - \pi\|_{L^2(\pi)}^2 \leq \sum_{n = 2t+1}^\infty \Prob_\nu(T > n).
\]
\end{thm}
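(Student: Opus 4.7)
The plan is to combine a spectral representation of $\|P^t(\nu,\cdot) - \pi\|_{L^2(\pi)}^2$ with the renewal structure that $T$ imposes on the chain started from $\nu$. Setting $g := d\nu/d\pi - 1 \in L^2(\pi)$, reversibility gives $d(\nu P^t)/d\pi = 1 + P^t g$, so
\[
\|P^t(\nu,\cdot) - \pi\|_{L^2(\pi)}^2 \;=\; \|P^t g\|_{L^2(\pi)}^2 \;=\; \langle g, P^{2t} g\rangle_\pi \;=:\; a_{2t},
\]
which unpacks as $a_s = \E_\nu[g(X_s)]$. Since $P$ is self-adjoint with spectrum in $[0,1]$, the spectral theorem writes $a_s = \int_0^1 \lambda^s\, d\sigma_g(\lambda)$ for a finite positive measure $\sigma_g$ on $[0,1]$ with $\sigma_g(\{1\}) = 0$; in particular the sequence $(a_s)$ is nonnegative, nonincreasing, tends to $0$, and is completely monotone. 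This is where the nonnegative-eigenvalue hypothesis enters.

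Next I would derive a renewal equation by conditioning on $T$. By Definition~\ref{defn:strongrandom}, on $\{T = k\}$ the law of $X_k$ is $\nu$ and the subsequent chain is Markov from $X_k$, so $\E_\nu[g(X_s) \mid T = k] = a_{s-k}$ for $k \leq s$, yielding
\[
a_s \;=\; \sum_{k=1}^s \Prob_\nu(T=k)\, a_{s-k} + c_s, \qquad c_s := \E_\nu\bigl[g(X_s)\, \1_{T > s}\bigr].
\]
In generating-function form with $A(z) = \sum_s a_s z^s$, $R(z) = \E_\nu[z^T]$, $C(z) = \sum_s c_s z^s$, and $\tilde R(z) = \sum_n \Prob_\nu(T > n)\, z^n$, this becomes $A(z)(1-R(z)) = C(z)$, equivalently $(1-z)\tilde R(z) A(z) = C(z)$.

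The crux is extracting $a_{2t} \leq \sum_{n \geq 2t+1} \Prob_\nu(T > n)$ from this identity. The plan is to iterate the renewal equation through the strong random times $T_1 = T,\, T_2 = T_1 + T'_1,\, \ldots$, each of which is again a strong random time for the chain from $\nu$ by the restart property $X_{T_k} \sim \nu$. This rewrites $a_{2t}$ as a sum of residuals of the form $\E_\nu[g(X_\cdot)\, \1_{T_k > \cdot}]$, each controlled via Cauchy--Schwarz against $\Prob_\nu(T_k > \cdot)$, and the complete monotonicity of $(a_s)$ is what forces the signed contributions to sum cleanly to the desired tail. Uniqueness of $\pi$ is then a byproduct: any stationary $\pi'$ has $\Prob_{\pi'}(T < \infty) = 1$, so the chain from $\pi'$ decomposes through $T$ as a mixture of $P^{t-s}(\nu,\cdot)$ plus a residual of vanishing mass, and the main inequality applied to $\nu$ forces $\pi' = \pi$.

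The main obstacle is this last extraction step. A naive pointwise estimate such as $a_n - a_{n+1} \leq \Prob_\nu(T > n+1)$ is in general \emph{false}---it can already fail at $n = 0$---so the argument cannot proceed term by term and must exploit cancellation across the whole tail $n \geq 2t+1$. These cancellations are precisely what the nonnegative-spectrum hypothesis delivers, via complete monotonicity of $(a_s)$; without it one recovers only the weaker total-variation-type estimates that the renewal equation supplies on its own.
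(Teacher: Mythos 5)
Your setup is sound and structurally close to the paper's: both begin with the observation that reversibility with nonnegative spectrum makes $a_s = \langle g, P^s g\rangle_\pi = \E_\nu[g(X_s)]$ a nonnegative, nonincreasing sequence, and both use the renewal identity obtained by conditioning on $T$. You also correctly flag that a pointwise bound $a_n - a_{n+1} \le \Prob_\nu(T>n+1)$ fails, so the argument must exploit cancellation along the whole tail. However, the final extraction step --- the crux, as you say --- is not carried out, and the mechanism you propose does not appear to deliver the stated inequality.

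The difficulty is the sign of the residuals. Working with the centered $g = d\nu/d\pi - 1$, the term $c_s = \E_\nu[g(X_s)\1_{T>s}]$ has no sign, and a Cauchy--Schwarz bound $|c_s| \le \|g\|_\infty \Prob_\nu(T>s)^{1/2}$ introduces a square root and a constant, neither of which appears in the clean bound $a_{2t} \le \sum_{n\ge 2t+1}\Prob_\nu(T>n)$; iterating through successive regeneration times $T_k$ makes this worse, since for $k > s$ one has $\Prob_\nu(T_k>s)=1$ and the series of residuals does not even converge absolutely. Invoking complete monotonicity to ``sum the signed contributions cleanly'' is not a step, it is a wish. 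The paper's proof closes this gap in a different way: it works with the \emph{nonnegative} function $f = d\nu/d\pi$ rather than $g$. For $f$, the renewal identity after summation by parts reads
\[
\sum_{j=1}^{n}\bigl(\E_\nu[f(X_{j-1})]-\E_\nu[f(X_j)]\bigr)\Prob_\nu(T>n-j) \;=\; \E_\nu[f(X_0)]\Prob_\nu(T>n) - \E_\nu[f(X_n),\,T>n],
\]
and because $f\ge 0$ the residual on the right can simply be \emph{dropped}, yielding a one-sided convolution inequality. Summing this over all $n\ge t+1$ (not a single $n$) and using only that the differences $\E_\nu[f(X_{j-1})]-\E_\nu[f(X_j)]$ are nonnegative gives exactly the desired bound, with no Cauchy--Schwarz and no need for anything beyond plain monotonicity. (A separate but short argument, using the explicit formula $\pi(S)=\frac{1}{\E_\nu[T]}\sum_{n\ge 0}\Prob_\nu(X_n\in S,\,T>n)$, shows $d\nu/d\pi \le \E_\nu[T]$, that $\lim_t \E_\nu[f(X_t)]=1$, and handles uniqueness of $\pi$ by a Hahn-decomposition argument rather than as a consequence of the main inequality.) In short: you have identified the right ingredients, but the key trick --- replacing $g$ by $f\ge 0$ so that the residual can be discarded, then summing a single renewal-type inequality over the tail --- is missing, and the substitute you propose (iterated regenerations plus Cauchy--Schwarz) would not produce the constant-free bound.
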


In this way, the distance from stationarity is controlled by the law of $T$. An exponential bound $\Prob_\nu(T > t) \leq A \rho^t$ implies that $\|P^t(\nu,\cdot) - \pi\|_{L^2(\pi)} \leq A' \rho^t$, where the leading constant changes but the exponential rate is the same.

Theorem \ref{thm:strongrandom} holds only when the Markov chain is started from the measure $\nu$ of the strong random time. Using an easy coupling argument, we can bound the total variation distance from stationarity for the chain started from any state $x \in \X$ in terms of $\Prob_\nu(T > t)$ and $\Prob_x(T > t)$. Below we give the result when these tail probabilities decay exponentially. For $r \in \R$ we use the notation $r_+ = r \vee 0$.

\begin{thm}
\label{thm:TV-strong}
Under the assumptions of Theorem \ref{thm:strongrandom}, fix $x \in \X$ and suppose that there are constants $A(\nu),A(x) < \infty$ and $\rho < 1$ such that $\Prob_\nu(T > t) \leq A(\nu) \rho^t$ and $\Prob_x(T > t) \leq A(x) \rho^t$ for all $t \geq 0$. Then
\[
\|P^t(x,\cdot) - \pi\|_\TV \leq F(x,t) \rho^t \qquad \text{for all } t \geq 0,
\]
where $F(x,t)$ is defined by setting
\begin{equation}
\label{eq:D-def}
D = \frac{1}{2} \sqrt{\frac{A(\nu) \rho}{1-\rho}}
\end{equation}
and then letting
\begin{equation}
\label{eq:F-def}
F(x,t) = \frac{1-\rho}{\rho} DA(x) t + (1-D)_+ A(x) + D.
\end{equation}
\end{thm}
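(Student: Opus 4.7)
The plan is to derive a $\TV$-bound for the chain started from $\nu$ via Theorem \ref{thm:strongrandom}, and then transfer it to the chain started from $x$ by decomposing on the value of the strong random time $T$.

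To start, I would apply Theorem \ref{thm:strongrandom} together with the tail bound $\Prob_\nu(T > n) \leq A(\nu)\rho^n$ and the inequality $\|\cdot\|_\TV \leq \frac{1}{2}\|\cdot\|_{L^2(\pi)}$ from the excerpt. Summing the geometric series gives, for every integer $u \geq 0$,
\[
\|P^u(\nu, \cdot) - \pi\|_{L^2(\pi)}^2 \leq \sum_{n \geq 2u+1} A(\nu)\rho^n = \frac{A(\nu)\rho^{2u+1}}{1-\rho},
\]
so that $\|P^u(\nu, \cdot) - \pi\|_\TV \leq D\rho^u$ with $D$ as in \eqref{eq:D-def}.

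Next I would decompose $P^t(x, \cdot)$ according to the value of $T$. The strong random time property combined with the Markov property implies $\Prob_x(X_t \in \cdot \mid T = s) = P^{t-s}(\nu, \cdot)$ whenever $\Prob_x(T = s) > 0$ and $s \leq t$. Using the trivial bound $\|\Prob_x(X_t \in \cdot \mid T > t) - \pi\|_\TV \leq 1$ for the event $\{T > t\}$ and applying the triangle inequality yields
\[
\|P^t(x, \cdot) - \pi\|_\TV \leq \sum_{s=0}^t \Prob_x(T=s)\,\|P^{t-s}(\nu, \cdot) - \pi\|_\TV + \Prob_x(T > t) \leq D\rho^t \sum_{s=0}^t \Prob_x(T=s)\,\rho^{-s} + \Prob_x(T > t).
\]

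Finally I would perform summation by parts. Writing $a_s = \Prob_x(T > s)$ (with $a_{-1} = 1$) and using $\Prob_x(T=s) = a_{s-1} - a_s$, a short manipulation gives
\[
\sum_{s=0}^t \Prob_x(T=s)\,\rho^{-s} = 1 + \frac{1-\rho}{\rho}\sum_{s=0}^{t-1} a_s\,\rho^{-s} - a_t\,\rho^{-t},
\]
and the bound $a_s\,\rho^{-s} \leq A(x)$ controls the middle sum by $A(x)\,t$. The main obstacle is recognizing the need to \emph{retain} the negative boundary term $-a_t\,\rho^{-t}$: after multiplying by $D\rho^t$ and adding the tail $\Prob_x(T > t) = a_t$, the boundary and tail terms partially cancel as $-Da_t + a_t = (1-D)a_t \leq (1-D)_+ a_t \leq (1-D)_+ A(x)\rho^t$, where the middle inequality uses $a_t \geq 0$ (making the quantity nonpositive when $D > 1$). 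This cancellation produces exactly the $(1-D)_+ A(x)$ coefficient appearing in \eqref{eq:F-def}; dropping the boundary term would instead yield the weaker constant $A(x)$, overshooting by $DA(x)\rho^t$.
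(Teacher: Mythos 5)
Your proof is correct and matches the paper's argument essentially step for step: apply Theorem \ref{thm:strongrandom} and the geometric sum to get $\|P^u(\nu,\cdot)-\pi\|_\TV \leq D\rho^u$, then use the first-entrance decomposition on the value of $T$, and finally sum by parts while retaining the negative boundary term $-\Prob_x(T>t)$ so that it cancels against the tail term to produce the $(1-D)_+$ coefficient, which is exactly the mechanism the paper uses. The only cosmetic difference is that you factor $\rho^{t-s}$ as $\rho^t\rho^{-s}$ before summing by parts, whereas the paper keeps $\rho^{t-n}$ inside the sum.
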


We observe that $F(x,t)$ is linear in $t$ and that the exponential rate $\rho$ is the same as the decay rate of the law of $T$.

\subsection{Drift and minorization}
\label{sec:drift-minorization}

A Markov chain satisfies a drift and minorization condition if it has a \emph{drift function} with respect to a \emph{small set}.

\begin{defn}
\label{defn:drift}
Let $(X_t)$ be a Markov chain on $\X$ with transition kernel $P$. A \textbf{drift function} (or Foster--Lyapunov function) for $(X_t)$ with respect to a subset $C \subset \X$ is a function $V: \X \to [1,\infty)$ together with constants $\lambda < 1$ and $K < \infty$ such that
\[
PV(x) \leq \begin{cases} \lambda V(x), & x \notin C \\
 K, & x \in C.	
 \end{cases}
\]
\end{defn}

\begin{defn}
\label{defn:small}
Let $(X_t)$ be a Markov chain on $\X$ with transition kernel $P$. The subset $C \subset \X$ is a \textbf{small set} for $(X_t)$ if there are an integer $m \geq 1$, a constant $\e > 0$, and a probability measure $\nu$ on $\X$ such that
\[
P^m(x,S) \geq \e \nu(S) \qquad \text{for all $x \in C$, $S \subset \X$.}
\]
In this case, we say that $C$ has \emph{$m$-step minorization}.
\end{defn}

Let $C$ be a small set for $(X_t)$. For any initial measure $X_0 \sim \mu$, the following algorithm defines the joint law $\Prob_\mu$ of the chain $(X_t)$ and a strong random time $T$ with measure $\nu$.

\begin{alg}
\label{alg:T-construction}
Construction of the strong random time $T$.

\vspace{8pt}
\hrule
\vspace{8pt}

\begin{adjustwidth}{0.25in}{0.25in}
\begin{enumerate}[label=\arabic*.]
\item Start with $X_0 \sim \mu$ and $n = 0$.
\item While $X_n \notin C$:
	\begin{enumerate}[label=2\alph*.]
	\item Sample $X_{n+1} \sim P(X_n,\cdot)$ and replace $n$ with $n+1$.
	\end{enumerate}
\item[] End While
\item Flip a coin with heads probability $\e$ and tails probability $1-\e$.
\item If coin shows heads, sample $X_{n+m} \sim \nu$ and set $T = n+m$.
\item If coin shows tails, sample $X_{n+m}$ from the remainder distribution $\frac{1}{1-\e}[P^m(X_n, \cdot) - \e \nu(\cdot)]$.
\item If $m > 1$, sample $(X_{n+1},\ldots,X_{n+m-1})$ from the correct conditional distribution given $X_n$ and $X_{n+m}$.
\item Replace $n$ with $n+m$.
\item If coin showed tails, return to step $2$.
\item Sample the rest of the chain $(X_{n+1},X_{n+2},\ldots)$ from the correct conditional distribution given $X_n$.
\end{enumerate}
\noindent If the entire chain $(X_0,X_1,\ldots)$ is sampled without ever reaching step $4$, set $T = \infty$.
\end{adjustwidth}

\vspace{8pt}
\hrule
\vspace{6pt}
\end{alg}

In this algorithm, it is evident that the chain $(X_t)$ has the right law and that $T$ is a randomized stopping time for $(X_t)$. As well, the event $\{T = n\}$ is the same as the event that a coin was flipped at time $n-m$ and showed heads. The conditional law of $X_n$ given this event is $\nu$. Hence $T$ is a strong random time for $(X_t)$ with measure $\nu$.\footnote{To be fully rigorous, we could show that for any probability space $\Omega$ with a random variable $X = (X_0,X_1,\ldots)$ and a collection of measures $\{\Prob_\mu\}$ describing the law of $(X_t)$ started from $X_0 \sim \mu$, there is a probability space $\overline{\Omega}$ with a projection map $p: \overline{\Omega} \to \Omega$ and a random variable $T$ along with a collection of measures $\{\overline{\Prob}_\mu\}$, such that the push-forward of each $\overline{\Prob}_\mu$ by $p$ is $\Prob_\mu$ and the joint law of $X \circ p$ and $T$ under each $\overline{\Prob}_\mu$ is as described in Algorithm \ref{alg:T-construction}. This technical construction is carried out in \cite{J16}.}

So far, we have constructed $T$ using only that $C$ is a small set. A drift function for $(X_t)$ with respect to $C$ ensures that the chain will visit $C$ frequently, giving many chances for the coin to show heads and thereby bounding the law of $T$.
\begin{thm}
\label{thm:tail-bound}
Suppose that the Markov chain $(X_t)$ on $\X$ has a drift function with respect to a small set $C$. Let $V(x)$, $\lambda < 1$, $K < \infty$ be the data associated with the drift function, and let $\nu$, $m \geq 1$, $\e > 0$ be the data associated with the small set. Construct the strong random time $T$ with measure $\nu$ using Algorithm \ref{alg:T-construction}. Then $\Prob_\mu(T < \infty) = 1$ for all probability measures $\mu$ on $\X$. If $\mu(V) < \infty$, then
\[
\Prob_\mu(T > t) \leq \mu(V)^r \rho^{t+1-m} \qquad \text{for all } t \geq 0,
\]
where the formulas for $\rho,r$ are as follows. If $\e = 1$, then set $\rho = \lambda$ and $r = 1$. If $\e < 1$, then set
\begin{align}
\label{eq:B-def} B &= \frac{1 - \lambda^m}{1 - \lambda} (K - \lambda) + \lambda^m, \\
\label{eq:rho-def} \rho &= \lambda \vee \exp\left( \frac{-\log(1-\e) \log \lambda}{-m \log \lambda + \log(B - \e) - \log(1-\e)} \right), \\
\label{eq:r-def} r &= \frac{\log \rho}{\log \lambda}.
\end{align}
We have $\rho < 1$. In addition, the measure $\nu$ satisfies
\[
\nu(V) \leq \frac{B - (1-\e)}{\e}
\]
no matter whether $\e = 1$ or $\e < 1$.
\end{thm}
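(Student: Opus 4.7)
I plan to split the claim into the bound on $\nu(V)$ and the tail bound on $T$. For the bound on $\nu(V)$: the minorization gives $P^m(x,\cdot) = \e\nu + (1-\e)R_x$ on $C$ with $R_x$ a probability measure, so applying both sides to $V$ gives $P^m V(x) = \e\nu(V) + (1-\e)R_x(V) \geq \e\nu(V) + (1-\e)$ (using $V \geq 1$), and it then suffices to show $P^m V(x) \leq B$ for some $x \in C$. Since $V \geq 1$, the drift hypothesis gives the pointwise inequality $PV \leq \lambda V + (K-\lambda)\1_C$. Iterating (applying $P^{m-1}$ and back-substituting) produces $P^m V \leq \lambda^{m-1} PV + (K-\lambda)\sum_{k=1}^{m-1}\lambda^{m-1-k} P^k \1_C$; on $C$ I substitute $PV(x) \leq K$ (the direct drift bound) and $P^k \1_C \leq 1$, yielding $P^m V(x) \leq \lambda^{m-1} K + (K-\lambda)(1-\lambda^{m-1})/(1-\lambda)$, which is an algebraic rewriting of $B$.

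For the tail bound I work at the cycle level, treating the case $\e < 1$ (the $\e = 1$ case has $G = 0$ deterministically and reduces to a single first-passage estimate). Let $\tau_0, \tau_1, \ldots$ be the successive coin-flip times in Algorithm \ref{alg:T-construction}, and let $G \in \{0, 1, \ldots\}$ denote the index of the first heads, so $T = \tau_G + m$. With $r = \log\rho/\log\lambda \in (0,1]$ and $\rho = \lambda^r$ as prescribed, a short log computation shows that the theorem's formulas for $\rho$ and $r$ make $\rho$ the smallest value in $[\lambda, 1)$ satisfying
\[
(1-\e)^{1-r}(B-\e)^r \leq \rho^m.
\]
I then define $\tilde M_i = V(X_{\tau_i})^r\rho^{-\tau_i}\1_{G \geq i}$ and will show this non-negative process is a supermartingale for the filtration $(\mathcal{F}_{\tau_i})$. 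Two uses of Jensen (valid since $r \leq 1$) supply the two ingredients: (i) $PV^r \leq (PV)^r \leq \rho V^r$ on $\X \setminus C$, so optional stopping of the pointwise supermartingale $V(X_n)^r\rho^{-n}$ at $\tau_C$ gives $\E_y[V(X_{\tau_C})^r\rho^{-\tau_C}] \leq V(y)^r$ for any starting point $y$; and (ii) $R_x(V^r) \leq R_x(V)^r \leq ((B-\e)/(1-\e))^r$, using the first-part bound on $R_x(V)$. The one-cycle conditional expectation (heads contributes $0$; tails routes through $R_{X_{\tau_i}}$ and subsequent hitting of $C$) then yields
\[
\E[\tilde M_{i+1}\mid\mathcal{F}_{\tau_i}] \leq (1-\e)\rho^{-\tau_i-m}((B-\e)/(1-\e))^r\1_{G \geq i} \leq \rho^{-\tau_i}\1_{G \geq i} \leq \tilde M_i,
\]
the middle step being exactly the displayed condition on $\rho$.

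The rest is mechanical. I apply optional stopping for non-negative supermartingales at the stopping time $G$ (via Fatou on $\tilde M_{G \wedge i}$) to get $\E_\mu[\tilde M_G\1_{G < \infty}] \leq \E_\mu[\tilde M_0] \leq \mu(V^r) \leq \mu(V)^r$; the last two inequalities use optional stopping of $V^r(X_n)\rho^{-n}$ at $\tau_0$ and concavity of $x \mapsto x^r$ respectively. Since $V^r \geq 1$ and $\tau_G = T - m$ on $\{G < \infty\}$, this reads $\E_\mu[\rho^{-T}\1_{T < \infty}] \leq \mu(V)^r\rho^{-m}$, and Markov applied at the threshold $\rho^{-(t+1)}$ yields $\Prob_\mu(T \geq t+1, T < \infty) \leq \mu(V)^r\rho^{t+1-m}$. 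To upgrade this to $\Prob_\mu(T > t) \leq \mu(V)^r\rho^{t+1-m}$, I separately show $\Prob_\mu(T = \infty) = 0$ by a standard Borel--Cantelli argument: from any $x$ the drift and pointwise finiteness of $V$ give $\Prob_x(\tau_0 < \infty) = 1$; the remainder distributions all have $V$-moment bounded by $(B-\e)/(1-\e)$, so the same holds at each $\tau_i$ inductively; and each coin succeeds independently with probability $\e$. Finally, the claim $\rho < 1$ reduces to checking that the exponent in \eqref{eq:rho-def} is negative, which follows from $B > \e$ (ensured by $B \geq 1$).

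The main technical hurdle is identifying the supermartingale $\tilde M_i$ together with the algebraic identity $(1-\e)^{1-r}(B-\e)^r = \rho^m$ governing the trade-off between drift-driven returns to $C$ and geometric coin failures. Once this is spotted, everything collapses cleanly and the tail is purely exponential in $t$. Other natural routes (e.g.\ applying Markov to a single exponential moment $\E_\mu[\theta^T]$) run into a $1/(1-\theta\rho)$ blow-up at the optimal rate and would leave a spurious polynomial prefactor.
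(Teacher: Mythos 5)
Your bound on $\nu(V)$ is correct and matches the paper's Lemma \ref{lemma:B-bound} (decompose $P^m(x,\cdot) = \e\nu + (1-\e)R_x$ on $C$, iterate $PV \leq \lambda V + (K-\lambda)\1_C$, and use $V \geq 1$). The identification $\rho^m = (1-\e)^{1-r}(B-\e)^r$ as the defining constraint is the same algebra underlying \eqref{eq:rho-def}, and the ingredients you invoke --- Jensen's inequality $P V^r \leq (PV)^r$ for $r \in [0,1]$, the stopped exponential moment $\E_y[\rho^{-\tau_C}] \leq V(y)^r$ from Lemma \ref{lemma:TC}, and the moment bound $R_x(V) \leq (B-\e)/(1-\e)$ --- all appear in the paper's proof as well.

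However, the supermartingale step contains a genuine gap. You define $\tilde M_i = V(X_{\tau_i})^r\rho^{-\tau_i}\1_{G\geq i}$ and claim to apply optional stopping at $G$ via Fatou on $\tilde M_{G\wedge i}$. The trouble is a filtration mismatch. If $\mathcal{F}_{\tau_i}$ does \emph{not} contain the outcome of coin $i$, then your one-cycle computation $\E[\tilde M_{i+1}\mid\mathcal{F}_{\tau_i}] \leq (1-\e)\rho^{-\tau_i-m}((B-\e)/(1-\e))^r\1_{G\geq i} \leq \tilde M_i$ is valid (the $(1-\e)$ factor is the probability of tails), but then $\{G\leq i\}$ is not $\mathcal{F}_{\tau_i}$-measurable, so $G$ is not a stopping time and $\tilde M_{G\wedge i}$ is not a supermartingale. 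If instead $\mathcal{F}_{\tau_i}$ \emph{does} contain coin $i$, then $G$ is a stopping time, but on the event $\{\text{coin }i\text{ tails}\}$ the $(1-\e)$ factor disappears from the conditional expectation, and the supermartingale inequality would require $\rho^{-m}((B-\e)/(1-\e))^r \leq V(X_{\tau_i})^r$, i.e.\ $V^r \geq 1/(1-\e)$ on $C$, which is false in general (e.g.\ $V \equiv 1$ on $C$). Neither reading makes the optional stopping step go through.

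The deeper problem is that your intermediate target, $\E_\mu[\rho^{-T}\1_{T<\infty}] \leq \mu(V)^r\rho^{-m}$, is strictly stronger than the theorem and is in general false. When $\rho = \rho_0 > \lambda$, the defining constraint $(1-\e)\rho^{-m}((B-\e)/(1-\e))^r = 1$ is an equality, and $T = \tau_G + m$ is a geometric-index sum of excursions with $G \sim \mathrm{Geometric}(\e)$; the generating function $\E[\theta^G]$ has its radius of convergence at $\theta = 1/(1-\e)$, and the critical $\rho$ pushes $\theta$ exactly to that boundary. Concretely, the key renewal theorem gives $\Prob_\nu(T > t) \asymp c\rho^t$, and then $\E[\rho^{-T}\1_{T<\infty}] = \sum_t\rho^{-t}\Prob(T=t)$ diverges. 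So no amount of repairing the supermartingale construction can salvage the exponential-moment route at the sharp rate; you flagged exactly this blow-up for the ``single exponential moment'' approach but it applies to yours too. The paper sidesteps the issue by inducting directly on $t$: it proves $\sup_{\mubar\in\D}\Prob_\mubar(T > t) \leq \alpha\rho^t$ by conditioning on $\tau_C = s$, invoking the inductive hypothesis at time $t-s-m < t$ with weight $\Prob_\mubar(\tau_C=s)$, and summing --- the weights are summable, so there is no divergent geometric series at the critical rate. You would need to restructure your cycle decomposition into such an induction on $t$ rather than on the cycle index $g$.

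One smaller remark: when $\rho = \lambda \vee \rho_0 = \lambda$ (so $r = 1$) your constraint $(1-\e)^{1-r}(B-\e)^r \leq \rho^m$ becomes $B-\e \leq \lambda^m$; you should state explicitly that this holds precisely when $\rho_0 \leq \lambda$ (equivalently $r_0 \geq 1$), since that is what justifies the $\vee\lambda$ in \eqref{eq:rho-def}.
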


Theorem \ref{thm:tail-bound} shows that the tail probabilities of $T$ decay exponentially. This is exactly what is needed to apply Theorems \ref{thm:strongrandom} and \ref{thm:TV-strong}. We arrive at an explicit form of \eqref{eq:geom-ergodic} when the chain $(X_t)$ is reversible with nonnegative eigenvalues and satisfies a drift and minorization condition.

\begin{thm}
\label{thm:TV-drift-minorize}
Let $(X_t)$ be a Markov chain on $\X$ with transition kernel $P$. Assume that $(X_t)$ is reversible with respect to the probability measure $\pi$ and that it has nonnegative eigenvalues. Also assume that $(X_t)$ has a drift function with respect to a small set $C \subset \X$. Let $V(x)$, $\lambda < 1$, $K < \infty$ be the data associated with the drift function, and let $\nu$, $m \geq 1$, $\e > 0$ be the data associated with the small set. Then
\[
\|P^t(x,\cdot) - \pi\|_\TV \leq F(x,t) \rho^t \qquad \text{for all $x \in \X$, $t \geq 0$,}
\]
where $\rho < 1$ and $F(x,t)$ (which is linear in $t$) are defined by the following recipe.
\begin{enumerate}[label=\arabic*.]
\item Define $B$ as in \eqref{eq:B-def}.
\item If $\e = 1$, then set $\rho = \lambda$ and $r = 1$.
\item If $\e < 1$, then define $\rho$ as in \eqref{eq:rho-def} and $r$ as in \eqref{eq:r-def}.
\item Define the constant $A(\nu)$ and the function $A(x)$ by
\[
A(\nu) = \left[ \frac{B - (1-\e)}{\e} \right]^r \rho^{1-m}, \qquad A(x) = V(x)^r \rho^{1-m}.
\]
\item Define $D$ as in \eqref{eq:D-def}.
\item Define $F(x,t)$ as in \eqref{eq:F-def}.
\end{enumerate}
\end{thm}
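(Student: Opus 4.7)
The proof is essentially an assembly of three earlier results. The plan is to use the drift and minorization data to construct a strong random time $T$ with measure $\nu$ via Algorithm \ref{alg:T-construction}, bound its tail probabilities using Theorem \ref{thm:tail-bound}, and feed these bounds into Theorem \ref{thm:TV-strong} to obtain the total variation estimate.

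First, I would construct $T$ via Algorithm \ref{alg:T-construction}; this produces a strong random time for $(X_t)$ with measure $\nu$ to which Theorem \ref{thm:tail-bound} applies. Applied with the initial measure $\mu = \nu$, that theorem yields
\[
\Prob_\nu(T > t) \leq \nu(V)^r \rho^{t+1-m} \leq \left[\frac{B - (1-\e)}{\e}\right]^r \rho^{1-m} \cdot \rho^t = A(\nu) \rho^t,
\]
where I have used the universal bound $\nu(V) \leq (B-(1-\e))/\e$ from the last line of Theorem \ref{thm:tail-bound} (valid in both the $\e = 1$ and $\e < 1$ cases). Applied with $\mu = \delta_x$, which satisfies $\mu(V) = V(x) < \infty$ since $V \geq 1$, it gives $\Prob_x(T > t) \leq V(x)^r \rho^{1-m} \cdot \rho^t = A(x) \rho^t$ for every $x \in \X$ and $t \geq 0$. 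These are exactly the two exponential tail hypotheses demanded by Theorem \ref{thm:TV-strong}.

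Next I would verify the remaining hypotheses of Theorems \ref{thm:strongrandom} and \ref{thm:TV-strong}. The Markov chain is reversible with nonnegative eigenvalues by assumption. By inspection of Algorithm \ref{alg:T-construction}, whenever $T$ is finite it equals $n + m$ for some $n \geq 0$, so $T \geq m \geq 1$ almost surely, giving $\Prob_\nu(T \geq 1) = 1$. The exponential tail bound above immediately implies $\E_\nu[T] < \infty$, and Theorem \ref{thm:tail-bound} asserts $\Prob_\mu(T < \infty) = 1$ for every probability measure $\mu$. With all hypotheses satisfied, Theorem \ref{thm:TV-strong} directly yields $\|P^t(x,\cdot) - \pi\|_\TV \leq F(x,t) \rho^t$ with $F$ and $D$ defined as in \eqref{eq:D-def}--\eqref{eq:F-def}, matching the recipe in the statement verbatim.

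There is no serious obstacle here: the theorem is a clean packaging of previously established pieces. The only mild bookkeeping items are checking that the bound on $\nu(V)$ collapses correctly in the $\e = 1$ case (where $r = 1$ and $(B - (1-\e))/\e = B$), and confirming the uniform lower bound $T \geq m$ from the structure of the algorithm. Both are immediate from the stated formulas and from the construction of $T$.
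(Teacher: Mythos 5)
Your proposal is correct and is exactly the paper's approach: the paper's proof of Theorem \ref{thm:TV-drift-minorize} is the one-liner ``Apply Theorem \ref{thm:tail-bound} followed by Theorem \ref{thm:TV-strong},'' and you have simply unpacked the verification of the intermediate hypotheses. All of your bookkeeping checks (the two specializations $\mu = \nu$ and $\mu = \delta_x$, $T \geq m \geq 1$, $\E_\nu[T] < \infty$ from the exponential tail, and the behavior of the $\nu(V)$ bound when $\e = 1$) are accurate.
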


\begin{proof}
Apply Theorem \ref{thm:tail-bound} followed by Theorem \ref{thm:TV-strong}.
\end{proof}

Theorem \ref{thm:V-norm} strengthens Theorem \ref{thm:TV-drift-minorize} to a bound of the form
\begin{equation}
\label{eq:G-def}
\|P^t(x,\cdot) - \pi\|_V \leq G(x,t) \rho^t \qquad \text{for all $x \in \X$, $t \geq 0$,}
\end{equation}
where $V$ is the drift function from Definition \ref{defn:drift} and the value of $\rho$ is the same as in Theorem \ref{thm:TV-drift-minorize}. The \emph{$V$-norm distance} between two probability measures $\mu,\mu'$ on $\X$ is defined to be
\[
\|\mu - \mu'\|_V = \sup_{|f| \leq V} |\mu(f) - \mu'(f)|.
\]
Note that since $V \geq 1$, we have
\[
2 \|\mu - \mu'\|_\TV = \sup_{|f| \leq 1} |\mu(f) - \mu'(f)| \leq \|\mu - \mu'\|_V.
\]

\subsection{Related work}
\label{sec:related}

In the case $m = 1$, the decay rate $\rho$ of the law of $T$ was identified by Roberts and Tweedie \cite{RT99} (who use the notation $\beta_{\textrm{RT}} = \rho^{-1}$). Theorem 4.1(i) of \cite{RT99} is equivalent to a bound of the form
\[
\Prob_\mu(T > t) \leq (\textrm{const}) \mu(V)^r t \rho^t.
\]
Theorem \ref{thm:tail-bound} slightly improves this result by removing the factor of $t$ and generalizing to the case $m > 1$.

The most important feature of Theorem \ref{thm:TV-drift-minorize} and its $V$-norm version, Theorem \ref{thm:V-norm}, is that the exponential rate $\rho$ is the same as the decay rate in Theorem \ref{thm:tail-bound}. As we will see in Section \ref{sec:nonreversible}, this conclusion can only be drawn for reversible Markov chains with nonnegative eigenvalues and does not hold in general. Baxendale \cite{B05} was the first to observe this consequence of reversibility.\footnote{For a key argument, he credits a comment of Meyn on a previous draft of \cite{B05}.} In the case $m = 1$, Theorem \ref{thm:V-norm} is very similar to \cite[Theorem 1.3]{B05}: both theorems have the same hypotheses, and both prove $V$-norm convergence of the chain with the same exponential rate $\rho$.

We will see in Section \ref{sec:example} that Theorem \ref{thm:V-norm} yields better numerical bounds than \cite[Theorem 1.3]{B05}. Indeed, in the regime $\rho > \lambda$, Theorem \ref{thm:V-norm} proves a bound \eqref{eq:G-def} where $G(x,t)$ is linear in $t$, while \cite[Theorem 1.3]{B05} is equivalent to \eqref{eq:G-def} with $G(x,t)$ cubic in $t$.

The method of proof in \cite{B05} uses analytic properties of generating functions for renewal sequences. In principle the argument could be extended to the case $m > 1$, but the resulting bound on the exponential convergence rate of the chain would be worse than the rate $\rho$ from Theorem \ref{thm:tail-bound}. Intuitively, this is because the law of $T$ might introduce artificial periodicity. Our approach using Theorem \ref{thm:strongrandom} is probabilistic and puts all cases $m \geq 1$ on the same footing.

\subsection{Types of random times}
\label{sec:types-random}

Strong random times first appeared in the pioneering work of Athreya and Ney \cite{AN78}. They carried out Algorithm \ref{alg:T-construction} in the case $m = 1$ for a Harris recurrent and strongly aperiodic Markov chain $(X_t)$ with stationary distribution $\pi$. (We define strong aperiodicity in Section \ref{sec:nonreversible}.) Using the strong random time $T$, they applied standard techniques of discrete renewal theory to show that $(X_t)$ converges to $\pi$ in total variation. Independently, Nummelin \cite{N78,N84} found an equivalent formulation of the same argument.

The term ``strong random time'' was coined by Miclo \cite{M10} in analogy with \emph{strong stationary times}. A strong stationary time for the chain $(X_t)$ is a strong random time whose measure $\nu$ is the stationary distribution of $(X_t)$. Strong stationary times were introduced by Aldous and Diaconis \cite{AD86,AD87}, who noted the connection with \cite{AN78,N78,N84}. They are now a well-established approach to bound the mixing time of finite Markov chains (see \cite[Ch.~6]{LP17} and \cite{P97}).

When $m = 1$, but not in general when $m > 1$, the strong random time $T$ from Algorithm \ref{alg:T-construction} is in fact a \emph{regeneration time} for $(X_t)$.

\begin{defn}
\label{defn:regeneration}
Let $(X_t)$ be a Markov chain on $\X$. A \textbf{regeneration time} for $(X_t)$ with measure $\nu$ is a randomized stopping time $T$ such that for every initial measure $\mu$ on $\X$,
\[
\Prob_\mu(X_n \in S \mid T = n, X_0,\ldots,X_{n-1}) = \nu(S) \qquad \text{for all $n \geq 0$, $S \subset \X$.}
\]
\end{defn}

Conditioned on $T = n$, the future trajectory $(X_n,X_{n+1},\ldots)$ has the law of the chain started from $\nu$ and is conditionally independent of the history $(X_0,\ldots,X_{n-1})$. For this reason we say that the chain regenerates at time $T$.

Given a regeneration time $T$, one can split the sample path $(X_0,X_1,\ldots)$ into an initial part $(X_0,\ldots,X_{T-1})$ followed by a sequence of i.i.d.\ tours between successive regenerations. This is useful for proving limit theorems about ergodic averages \cite{RR04,BLL08}. As pointed out by \cite{BLL08}, the full strength of Definition \ref{defn:regeneration} is required for the tours to be independent. When $T$ is a strong random time that is not a regeneration time, the sequence of tours is $1$-dependent. (See \cite[Theorem 17.3.1]{MT93} for a proof when $T$ is defined by Algorithm \ref{alg:T-construction} with $m > 1$, and \cite[Proposition 3.7]{J16} for the general case).

\subsection{Non-reversible chains}
\label{sec:nonreversible}

In this subsection we discuss variants of Theorem \ref{thm:TV-drift-minorize} that do not require the Markov chain $(X_t)$ to be reversible. For more background on the convergence theory of general state space Markov chains, we refer the reader to the detailed development in \cite{MT93}.

Nummelin and Tuominen \cite{NT82} showed that an aperiodic Markov chain satisfying a drift and minorization condition must be geometrically ergodic. Meyn and Tweedie \cite{MT94} obtained the first quantitative version of this result, with explicit formulas for $G,\rho$ in \eqref{eq:G-def}. They assume that the Markov chain is \emph{strongly aperiodic}: the small set $C$ has $1$-step minorization (that is, $m = 1$) and the measure $\nu$ satisfies $\nu(C) \geq \beta / \e$ for some constant $\beta > 0$. This assumption immediately implies that $\Prob_\nu(T = 1) \geq \beta$. Subsequent work of Baxendale \cite{B05} improved the bound of \cite{MT94}. (In Section \ref{sec:related} we discussed \cite[Theorem 1.3]{B05} for reversible chains; here we consider \cite[Theorem 1.1]{B05}, which does not require reversibility.) Most recently, Bednorz \cite{B13} has sharpened the proof of \cite[Theorem 1.1]{B05}, yielding convergence bounds which are tighter but difficult to compute unless the value of $\pi(C)$ is known exactly.

For the example considered in Section \ref{sec:example}, we will see that the numerical bound provided by \cite[Theorem 1.1]{B05} is extremely conservative. Here we explain why this must be the case. Our explanation is adapted from the similar discussion in \cite[Section 3.1]{B05}. Suppose that the Markov chain $(X_t)$ with stationary distribution $\pi$ satisfies a drift and minorization condition and is strongly aperiodic. What can we conclude about the convergence rate solely from the drift and minorization data and the aperiodicity parameter $\beta$? Below, we give an example where the minorization is well-behaved ($m = 1$, $\e = 1$, $\beta = \frac{1}{2}$) and the drift function is bounded ($\sup_{x \in \X} V(x) \leq 3$) with a drift rate $\lambda$ that can be chosen arbitrarily close to $1$. Theorem \ref{thm:tail-bound} implies that $\Prob_\mu(T > t) \leq 3 \lambda^t$ for every initial measure $\mu$. On the other hand, the Markov chain converges much more slowly: if $\rho_\TV$ is the optimal rate in \eqref{eq:geom-ergodic}, then $1 - \rho_\TV$ is proportional to $(1 - \lambda)^3$ as $\lambda \nearrow 1$. Thus, any analogue to Theorem \ref{thm:TV-drift-minorize} in which the assumption of reversibility is removed, as in \cite{MT94,B05,B13}, must have a significantly worse upper bound on the rate of convergence.

The example is the nearly periodic chain on $\X = \Z/N\Z$ with transition matrix
\[
P(j,k) = \begin{cases} 1, & j \neq 0 \text{ and } k = j-1 \\
 \frac{1}{2}, & j = 0 \text{ and } k \in \{0,N-1\}	 \\
 0, & \text{otherwise.}
 \end{cases}
\]
We set $C = \{0\}$ and use the drift function $V(j) = (1 - \frac{1}{N})^{-j}$, which satisfies Definition \ref{defn:drift} with $\lambda = 1 - \frac{1}{N}$ and $K = (1+e)/2$. For minorization we take $m = 1$, $\e = 1$, and $\nu(0) = \nu(N-1) = \frac{1}{2}$, so that Definition \ref{defn:small} is satisfied and the chain is strongly aperiodic with $\beta = \frac{1}{2}$.

The following heuristic argument shows that the Markov chain should take order $N^3$ steps to mix. Imagine a random walker moving around the circle $\Z/N\Z$ according to $P$. Every time it reaches zero, it pauses for a random number of time steps before continuing around. The amount of time that the walker pauses at zero is a geometric random variable with parameter $\frac{1}{2}$. In order for $\|P^t(x,\cdot)-\pi\|_\TV$ to be small, the total amount of time paused at zero (which is essentially a sum of independent $\mathrm{Geometric}(\frac{1}{2})$ random variables) must have standard deviation of at least order $N$. This will not happen until the random walker has taken order $N^2$ trips around the circle, so $t$ must be of order $N^3$. A computation in \cite[Section 3.1]{B05} confirms this argument by verifying that $1 - \rho_\TV$ is proportional to $1/N^3$.

An alternative to the approach of \cite{MT94,B05,B13} is the \emph{bivariate drift} method developed by Rosenthal \cite{R95}. See \cite{R02,RR04} for an exposition of this technique and \cite{F02} for a more flexible and powerful version. To use the method, one finds a small set $C$ and a so-called bivariate drift function with respect to $C$. Several papers \cite{R95,R95b,JH01,JH04,MH04} have followed this procedure to prove useful convergence bounds for Markov chains of practical significance in MCMC. In Section \ref{sec:example} we compare the bivariate drift method against Theorem \ref{thm:TV-drift-minorize} using an example treated in \cite{R95}, which is reversible with nonnegative eigenvalues. We find that Theorem \ref{thm:TV-drift-minorize} gives a tighter convergence bound. For more details on the relationship between univariate drift functions (as in Definition \ref{defn:drift}) and bivariate drift functions, see the discussion in \cite[Ch.~2]{J16}.

We finish by briefly mentioning Markov chains whose convergence rate is polynomial rather than exponential. In this setting, the assumption of reversibility with nonnegative eigenvalues does \emph{not} seem to improve the convergence bounds. Rather, Theorem \ref{thm:strongrandom} is outperformed by \cite[Theorem 3.4]{JR02}, which does not require reversibility. See \cite{FM03,DFMS04,DMS07} for more about chains with subexponential convergence rates.

\subsection{Outline}
\label{sec:outline}

Section \ref{sec:proofs-strongrandom} proves Theorems \ref{thm:strongrandom}--\ref{thm:TV-strong}, and Section \ref{sec:tail} proves Theorem \ref{thm:tail-bound}. As we have seen, Theorem \ref{thm:TV-drift-minorize} follows immediately from combining Theorems \ref{thm:TV-strong} and \ref{thm:tail-bound}. Section \ref{sec:V-norm} then strengthens Theorem \ref{thm:TV-drift-minorize} to a bound on the $V$-norm distance from stationarity. Theorem \ref{thm:V-norm} collects in one place the formulas for both the total variation bound (Theorem \ref{thm:TV-drift-minorize}) and the new $V$-norm bound. Section \ref{sec:example} applies Theorems \ref{thm:TV-drift-minorize} and \ref{thm:V-norm} to a Markov chain considered by \cite{R95} and compares the resulting numerical bounds against those of \cite{R95,B05}.

\section{Proofs for strong random times}
\label{sec:proofs-strongrandom}

In this section we prove Theorems \ref{thm:strongrandom}--\ref{thm:TV-strong}, which underlie the bounds in Theorems \ref{thm:TV-drift-minorize} and \ref{thm:V-norm}. Lemma \ref{lemma:core} below contains the core of the argument.

We begin by identifying the stationary distribution of the Markov chain.

\begin{lemma}
\label{lemma:stationary}
Let $(X_t)$ be a Markov chain on $\X$ with transition kernel $P$. Suppose that $T$ is a strong random time for $(X_t)$ with measure $\nu$ satisfying the conditions of Theorem \ref{thm:strongrandom}: $\Prob_\nu(T \geq 1) = 1$, $\E_\nu[T] < \infty$, and $\Prob_\mu(T < \infty) = 1$ for all probability measures $\mu$ on $\X$. Then $(X_t)$ has unique stationary distribution $\pi$ given by
\[
\pi(S) = \frac{1}{\E_\nu[T]} \sum_{n=0}^\infty \Prob_\nu(X_n \in S,\, T > n).
\]	
\end{lemma}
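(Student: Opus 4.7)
My approach will be to recognize $\pi$ as the normalized occupation measure of a single ``tour'' of the chain from $\nu$ up to its first strong random time $T$---a renewal/Kac-style construction. Two identities drive the argument: first, $\sum_{n \geq 0} \Prob_\nu(T > n) = \E_\nu[T]$, which supplies the correct normalization; second, the interaction of the randomized-stopping property of $T$ (giving a Markov-type identity) with the strong-random-time property $\Prob_\nu(X_n \in \cdot \mid T = n) = \nu$ (matching the boundary terms of a telescoping sum), which yields invariance.

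I would first check that $\pi$ is a probability measure by summing $\Prob_\nu(X_n \in \X,\, T > n) = \Prob_\nu(T > n)$ over $n \geq 0$, which equals $\E_\nu[T]$. To verify $\pi P = \pi$, I would apply $P$ termwise. Since $\{T > n\}$ is measurable with respect to $(X_0, \ldots, X_n)$ together with auxiliary randomness independent of $(X_{n+1}, X_{n+2}, \ldots)$, the Markov property yields
\[
\int \Prob_\nu(X_n \in dx,\, T > n)\, P(x,S) = \Prob_\nu(X_{n+1} \in S,\, T > n).
\]
Splitting $\{T > n\} = \{T = n+1\} \sqcup \{T > n+1\}$, the strong-random-time property collapses the first piece to $\nu(S)\Prob_\nu(T = n+1)$; summing over $n \geq 0$ gives $\nu(S)\Prob_\nu(T \geq 1) = \nu(S)$. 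Reindexing the second pieces gives $\sum_{m \geq 1}\Prob_\nu(X_m \in S,\, T > m)$. Adding the missing $m = 0$ term $\Prob_\nu(X_0 \in S,\, T > 0) = \nu(S)$ (using $\Prob_\nu(T \geq 1) = 1$) then reconstructs $\E_\nu[T]\pi(S)$, establishing $\pi = \pi P$.

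For uniqueness, given another stationary distribution $\pi'$, I would apply the same strong-random-time/Markov decomposition to write, for any starting measure $\mu$,
\[
\Prob_\mu(X_n \in S) = \sum_{k=0}^n \Prob_\mu(T = k)\, \Prob_\nu(X_{n-k} \in S) + \Prob_\mu(X_n \in S,\, T > n),
\]
then Cesaro-average over $n$. The remainder vanishes because $\Prob_\mu(T < \infty) = 1$. The main obstacle is establishing that the Cesaro average $\frac{1}{N}\sum_{n=0}^{N-1} \Prob_\nu(X_n \in S)$ converges to $\pi(S)$; I would handle this by iterating $T$ to produce regeneration times $0 = T_0 < T_1 < T_2 < \cdots$, using the Markov property and the strong-random-time property to argue that the tours $(X_{T_k}, \ldots, X_{T_{k+1}-1})$ are i.i.d., with tour lengths of mean $\E_\nu[T]$ and per-tour occupation of $S$ of mean $\E_\nu[T]\pi(S)$. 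The strong law of large numbers then gives $\frac{1}{N}\sum_{n=0}^{N-1} \1_S(X_n) \to \pi(S)$ $\Prob_\nu$-a.s., and bounded convergence transfers this to the Cesaro average of $\Prob_\nu(X_n \in S)$. A Tauberian-style convolution argument with the probability measure $\{\Prob_{\pi'}(T = k)\}_{k \geq 0}$ then passes the Cesaro limit through to $\frac{1}{N}\sum_{n=0}^{N-1}\Prob_{\pi'}(X_n \in S)$, and since the latter is identically $\pi'(S)$ by stationarity, we conclude $\pi' = \pi$.
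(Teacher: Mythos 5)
Your proof of invariance (that $\pi P = \pi$) is exactly the paper's argument: same termwise application of $P$ via the randomized-stopping-time property, same split of $\{T>n\}$ into $\{T=n+1\}\sqcup\{T>n+1\}$, same boundary cancellation using $\Prob_\nu(X_T\in S)=\nu(S)=\Prob_\nu(X_0\in S,\,T>0)$. The uniqueness proof, though, is genuinely different. The paper argues by contradiction: given two distinct stationary distributions, it applies the Hahn decomposition to their difference to obtain disjoint sets $\X_+,\X_-$, shows that the normalized positive and negative parts $\mu_1,\mu_2$ of the difference are themselves stationary measures supported on $\X_+,\X_-$ respectively, and then uses $\Prob_{\mu_i}(T<\infty)=1$ together with $\Prob_{\mu_i}(X_T\in\cdot)=\nu$ to force $\nu(\X_+)=\nu(\X_-)=0$, which is impossible. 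That argument is short and needs only the single given random time $T$.

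Your uniqueness route (regenerative tours, SLLN, Cesaro averaging) is the classical Athreya--Ney/Nummelin argument and is conceptually sound, but as written it contains a genuine error: the tours $(X_{T_k},\ldots,X_{T_{k+1}-1})$ are \emph{not} i.i.d.\ under the hypotheses of this lemma. Definition \ref{defn:strongrandom} only requires $\Prob_\mu(X_n\in S\mid T=n)=\nu(S)$; it does not require the history $(X_0,\ldots,X_{n-1})$ to be conditionally independent of the future given $\{T=n\}$. That stronger property is exactly what distinguishes a \emph{regeneration time} (Definition \ref{defn:regeneration}), and as the paper emphasizes in Section \ref{sec:types-random}, for a mere strong random time the tour sequence is only $1$-dependent. (The SLLN still holds for stationary $1$-dependent sequences, so the conclusion is salvageable, but the i.i.d.\ claim must be repaired.) A secondary issue is that the lemma hands you only one random time $T$; producing an infinite sequence $T_1<T_2<\cdots$ requires a nontrivial enlargement of the probability space that the paper itself pushes to a footnote and to \cite{J16}. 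Both issues can be bypassed entirely: your own decomposition exhibits $u_n:=\Prob_\nu(X_n\in S)=\sum_{k=1}^n\Prob_\nu(T=k)\,u_{n-k}+\Prob_\nu(X_n\in S,\,T>n)$ as a renewal equation whose increment distribution has finite mean $\E_\nu[T]$ and whose forcing sequence sums to $\E_\nu[T]\pi(S)$, and the elementary (Cesaro-averaged) renewal theorem then gives $\frac{1}{N}\sum_{n<N}u_n\to\pi(S)$ directly, with no tour construction and no SLLN.
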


In fact, the proof below goes through even if we replace the condition $\Prob_\mu(X_n \in S \mid T = n) = \nu(S)$ with the weaker condition $\Prob_\mu(X_T \in S) = \nu(S)$.

\begin{proof}
First we show that the given $\pi$ is stationary. We observe that
\[
\pi(\X) = \frac{1}{\E_\nu[T]} \sum_{n=0}^\infty \Prob_\nu(T > n) = 1.
\]
Since $T$ is a randomized stopping time for $(X_t)$, the event $\{T > n\}$ is conditionally independent of $X_{n+1}$ given $(X_0,\ldots,X_n)$. Hence for any $S \subset \X$ we have
\[
\Prob_\nu(X_{n+1} \in S \mid X_n = x,\, T > n) = \Prob_\nu(X_{n+1} \in S \mid X_n = x) = P(x,S)
\]
and therefore
\[
\Prob_\nu(X_{n+1} \in S \mid T > n) = \int_\X P(x,S) \Prob_\nu(X_n \in dx \mid T > n).
\]
It follows that
\[
\begin{split}
\pi P(S) &= \frac{1}{\E_\nu[T]} \sum_{n=0}^\infty \Prob_\nu(T > n) \int_\X P(x,S) \Prob_\nu(X_n \in dx \mid T > n) \\
&= \frac{1}{\E_\nu[T]} \sum_{n=0}^\infty \Prob_\nu(X_{n+1}\in S,\, T > n) \\
&= \frac{1}{\E_\nu[T]} \sum_{n=0}^\infty \Big[ \Prob_\nu(X_{n+1}\in S,\, T = n+1) + \Prob_\nu(X_{n+1}\in S,\, T > n+1) \Big] \\
&= \frac{1}{\E_\nu[T]} \Prob_\nu(X_T\in S) + \frac{1}{\E_\nu[T]} \sum_{n=1}^\infty \Prob_\nu(X_n\in S,\, T > n).
\end{split}
\]
Because $T$ is a strong random time with measure $\nu$, we know that
\[
\Prob_\nu(X_T \in S) = \nu(S) = \Prob_\nu(X_0 \in S) = \Prob_\nu(X_0 \in S,\, T > 0).
\]
Thus,
\[
\pi P(S) = \frac{1}{\E_\nu[T]} \sum_{n=0}^\infty \Prob_\nu(X_n\in S,\, T > n) = \pi(S).
\]

To prove uniqueness, suppose for contradiction that $\pi_1$ and $\pi_2$ are two different stationary distributions for $(X_t)$. Then, using the Hahn decomposition theorem \cite{C13}, we can partition $\X$ into disjoint subsets $\X = \X_+ \sqcup \X_-$ such that $\pi_1 - \pi_2$ is a positive measure on $\X_+$ and a negative measure on $\X_-$. Since $\pi_1$ and $\pi_2$ are different, $(\pi_1-\pi_2)(\X_+) = (\pi_2-\pi_1)(\X_-) > 0$. Define the probability measures $\mu_1,\mu_2$ on $\X$ by
\[
\mu_1(S) = \frac{(\pi_1-\pi_2)(S \cap \X_+)}{(\pi_1-\pi_2)(\X_+)}, \qquad \mu_2(S) = \frac{(\pi_2-\pi_1)(S \cap \X_-)}{(\pi_2-\pi_1)(\X_-)}.
\]
Then $\mu_1$ and $\mu_2$ are also stationary distributions for $(X_t)$, by the following argument. Since $\pi_1-\pi_2$ is an invariant measure for $(X_t)$,
\[
\begin{split}
(\pi_1-\pi_2)(\X_+) &= \int_{\X_+} P(x,\X_+)(\pi_1-\pi_2)(dx) - \int_{\X_-} P(x,\X_+)(\pi_2-\pi_1)(dx) \\
&\leq \int_{\X_+} (\pi_1-\pi_2)(dx) - 0 = (\pi_1-\pi_2)(\X_+).
\end{split}
\]
Hence the inequality in the middle is actually equality, and we have
\begin{align}
\int_{\X_+} P(x,\X_+)(\pi_1-\pi_2)(dx) &= \int_{\X_+} (\pi_1-\pi_2)(dx), \label{eq:NA1} \\
\int_{\X_-} P(x,\X_+)(\pi_2-\pi_1)(dx) &= 0. \label{eq:NA2}
\end{align}
From \eqref{eq:NA1} it follows that
\begin{equation}
\label{eq:NA3}
0 = \int_{\X_+} [1 - P(x,\X_+)](\pi_1-\pi_2)(dx) = \int_{\X_+} P(x,\X_-)(\pi_1-\pi_2)(dx).	
\end{equation}

Given $S \subset \X$, write $S_+ = S \cap \X_+$. Using that $\pi_1 - \pi_2$ is an invariant measure for $(X_t)$, followed by \eqref{eq:NA2}, we compute
\[
(\pi_1-\pi_2)(S_+) = \int_\X P(x,S_+)(\pi_1-\pi_2)(dx) = \int_{\X_+} P(x,S_+)(\pi_1-\pi_2)(dx).
\]
We also have by \eqref{eq:NA3} that
\[
\int_{\X_+} P(x,S)(\pi_1-\pi_2)(dx) = \int_{\X_+} P(x,S_+)(\pi_1-\pi_2)(dx).
\]
Therefore,
\[
\int_{\X_+} P(x,S)(\pi_1-\pi_2)(dx) = (\pi_1-\pi_2)(S_+)
\]
and we conclude that
\[
\mu_1 P(S) = \frac{1}{(\pi_1-\pi_2)(\X_+)} \int_{\X_+} P(x,S)(\pi_1-\pi_2)(dx) = \frac{(\pi_1-\pi_2)(S_+)}{(\pi_1-\pi_2)(\X_+)} = \mu_1(S).
\]
This proves that $\mu_1$ is stationary, and the argument for $\mu_2$ is the same.

Since $T$ is almost surely finite started from $\mu_1$,
\[
\sum_{n=1}^\infty \Prob_{\mu_1}(X_n\in \X_-,\, T = n) = \Prob_{\mu_1}(X_T\in \X_-) = \nu(\X_-).
\]
However,
\[
\sum_{n=1}^\infty \Prob_{\mu_1}(X_n\in \X_-,\, T = n) \leq \sum_{n=1}^\infty \Prob_{\mu_1}(X_n\in \X_-) = \sum_{n=1}^\infty \mu_1(\X_-) = 0.
\]
Thus $\nu(\X_-) = 0$, and by parallel reasoning $\nu(\X_+) = 0$ as well. This is impossible since $\nu(\X) = 1$. Therefore, the stationary distribution $\pi$ is unique.
\end{proof}

We will prove Theorem \ref{thm:strongrandom} by finding a function $f \geq 0$ on $\X$ such that the sequence $\E_\nu[f(X_t)]$ controls the convergence of $P^t(\nu,\cdot)$ to $\pi$ in $L^2(\pi)$ distance. Using that $(X_t)$ is reversible with nonnegative eigenvalues, we will show that:
\begin{enumerate}
\item $\|P^t(\nu,\cdot) - \pi\|_{L^2(\pi)}^2 = \E_\nu[f(X_{2t})] - 1$ for all $t \geq 0$;
\item The sequence $\E_\nu[f(X_t)]$ is nonincreasing and converges to $1$.
\end{enumerate}
Given these properties, Theorem \ref{thm:strongrandom} is an immediate consequence of the following lemma, which does not require reversibility and is proved via summation by parts.

\begin{lemma}
\label{lemma:core}
Let $T$ be a strong random time with measure $\nu$ for the Markov chain $(X_t)$ on $\X$. Assume that $\Prob_\nu(T \geq 1) = 1$ and $\E_\nu[T] < \infty$. Let $f \geq 0$ be a function on $\X$ such that each $\E_\nu[f(X_t)] < \infty$ and the sequence $\E_\nu[f(X_t)]$ is nonincreasing in $t$. Denote the limit of the sequence by $\E_\nu[f(X_\infty)]$. Then for all $t \geq 0$,
\[
\E_\nu[f(X_t)] - \E_\nu[f(X_\infty)] \leq \E_\nu[f(X_\infty)] \sum_{n=t+1}^\infty \Prob_\nu(T>n).
\]
\end{lemma}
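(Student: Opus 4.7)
The plan is to prove the single inequality $a_t Q_t \leq a_\infty \E_\nu[T]$, where I write $a_k = \E_\nu[f(X_k)]$, $q_k = \Prob_\nu(T > k)$, and $Q_t = \sum_{k=0}^t q_k$; the lemma will then follow by elementary algebra, using $Q_t \geq q_0 = 1$ and $\E_\nu[T] - Q_t = \sum_{n > t} q_n$.

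The first step is a renewal-type decomposition of $a_n$. Conditioning on $T = k$ for $k \leq n$, the strong random time property (which puts $X_k \sim \nu$) and the Markov property together imply $\E_\nu[f(X_n) \mid T = k] = a_{n-k}$, since the trajectory $(X_k, X_{k+1}, \ldots)$ has the conditional law of the chain restarted from $\nu$. Writing $r_n = \E_\nu[f(X_n) \1\{T > n\}]$ and $p_k = \Prob_\nu(T = k)$, with $p_0 = 0$ since $T \geq 1$, this yields $a_n = r_n + \sum_{k=1}^n p_k a_{n-k}$.

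Summing this identity from $n = 0$ to $n = t$, exchanging the order of the resulting double sum, and using $\sum_{k=1}^{t-j} p_k = 1 - q_{t-j}$ produces the summation-by-parts identity
\[
\sum_{n=0}^t r_n = \sum_{m=0}^t q_m \, a_{t-m}.
\]
Taking $t \to \infty$ and applying dominated convergence on the right (with dominating summable sequence $q_m a_0$) gives $\sum_{n=0}^\infty r_n = a_\infty \E_\nu[T]$. By the nonincreasing assumption, $a_{t-m} \geq a_t$ for each $m \in \{0, \ldots, t\}$, hence
\[
a_t Q_t \leq \sum_{m=0}^t q_m \, a_{t-m} = \sum_{n=0}^t r_n \leq \sum_{n=0}^\infty r_n = a_\infty \E_\nu[T],
\]
where the second inequality uses $r_n \geq 0$. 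This rearranges to $a_t \leq a_\infty \E_\nu[T]/Q_t$.

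Finally, since $Q_t \geq 1$ and $\E_\nu[T] \geq Q_t$, the nonnegativity $(\E_\nu[T] - Q_t)(Q_t - 1) \geq 0$ rearranges algebraically to $\E_\nu[T]/Q_t \leq 1 + (\E_\nu[T] - Q_t) = 1 + \sum_{n > t} q_n$; multiplying by $a_\infty$ gives the lemma. I do not anticipate a significant obstacle; the only delicate point is the dominated convergence step, which is routine given $\E_\nu[T] < \infty$ and the uniform bound $a_{t-m} \leq a_0$.
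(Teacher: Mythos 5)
Your proof is correct. It shares with the paper the key renewal-type identity
$\E_\nu[f(X_n)] = \E_\nu[f(X_n)\1\{T>n\}] + \sum_{k=1}^n \Prob_\nu(T=k)\,\E_\nu[f(X_{n-k})]$,
which both arguments derive from the strong random time and randomized stopping time properties. But the bookkeeping afterward is genuinely different. The paper applies Abel summation to the convolution, isolating the telescoping differences $\E_\nu[f(X_{j-1})]-\E_\nu[f(X_j)]$, and then sums the resulting one-sided inequality \eqref{eq:renewal-identity} over $n$ from $t+1$ to $\infty$ (plus a separate sum from $n=1$ to $\infty$ to show $\E_\nu[f(X_0)] \leq \E_\nu[f(X_\infty)]\E_\nu[T]$). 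You instead sum the renewal identity itself from $n=0$ to $t$, exchange the order of summation to get $\sum_{n=0}^t r_n = \sum_{m=0}^t q_m a_{t-m}$, evaluate $\sum_{n\geq 0} r_n = a_\infty \E_\nu[T]$ by dominated convergence, and then use monotonicity once to drop to $a_t Q_t$. Your closing algebraic step $(\E_\nu[T]-Q_t)(Q_t-1)\geq 0 \Rightarrow \E_\nu[T]/Q_t \leq 1+\sum_{n>t}q_n$ has no direct counterpart in the paper; it is a clean way to recover the stated bound from your intermediate estimate $a_t \leq a_\infty\E_\nu[T]/Q_t$, which is in fact slightly sharper than the lemma's conclusion since $Q_t\geq 1$. (For completeness: when $a_\infty = 0$ the inequality $a_t Q_t \leq a_\infty\E_\nu[T]$ forces $a_t = 0$, so the division by $Q_t$ and subsequent manipulations are harmless.) Both proofs are summation-by-parts arguments at heart, but yours avoids the telescoping differences entirely and yields a marginally stronger intermediate bound.
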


\begin{proof}
Fix a positive integer $n$. Since $T$ is a strong random time with measure $\nu$,
\[
\E_\nu[f(X_n),\, T\leq n] = \sum_{j=0}^{n-1} \E_\nu[f(X_j)]\Prob_\nu(T = n-j).
\]
Apply summation by parts to obtain
\begin{multline*}
\E_\nu[f(X_n),\, T\leq n] = \E_\nu[f(X_{n-1})] - \E_\nu[f(X_0)] \Prob_\nu(T > n) \\
+ \sum_{j=1}^{n-1} \Big( \E_\nu[f(X_{j-1})] - \E_\nu[f(X_j)] \Big) \Prob_\nu(T>n-j).
\end{multline*}
Rearranging this equation, we have
\begin{multline*}
\sum_{j=1}^{n} \Big( \E_\nu[f(X_{j-1})] - \E_\nu[f(X_j)] \Big) \Prob_\nu(T>n-j) \\
= \E_\nu[f(X_0)] \Prob_\nu(T>n) - \E_\nu[f(X_n),\, T>n]
\end{multline*}
and therefore
\begin{equation}
\label{eq:renewal-identity}
\sum_{j=1}^{n} \Big( \E_\nu[f(X_{j-1})] - \E_\nu[f(X_j)] \Big) \Prob_\nu(T>n-j) \leq \E_\nu[f(X_0)] \Prob_\nu(T>n).
\end{equation}

Each term $\E_\nu[f(X_{j-1})] - \E_\nu[f(X_j)]$ is nonnegative. Summing \eqref{eq:renewal-identity} from $n=1$ to $\infty$ gives
\[
\Big( \E_\nu[f(X_0)] - \E_\nu[f(X_\infty)] \Big) \E_\nu[T] \leq \E_\nu[f(X_0)](\E_\nu[T] - 1),
\]
which means that $\E_\nu[f(X_0)] \leq \E_\nu[f(X_\infty)]\E_\nu[T]$.

Fix $t\geq 0$. Summing \eqref{eq:renewal-identity} from $n=t+1$ to $\infty$, the left side is
\[
\sum_{j=1}^\infty \Big( \E_\nu[f(X_{j-1})] - \E_\nu[f(X_j)] \Big) \sum_{n = (t+1) \vee j}^\infty \Prob_\nu(T>n-j),
\]
which is greater than or equal to
\begin{multline*}
\sum_{j=t+1}^\infty \Big( \E_\nu[f(X_{j-1})] - \E_\nu[f(X_j)] \Big) \sum_{n=j}^\infty \Prob_\nu(T>n-j) \\
= \Big( \E_\nu[f(X_t)] - \E_\nu[f(X_\infty)] \Big) \E_\nu[T].
\end{multline*}
The right side of the sum of \eqref{eq:renewal-identity} from $n=t+1$ to $\infty$ is
\[
\E_\nu[f(X_0)] \sum_{n=t+1}^\infty \Prob_\nu(T>n) \leq \E_\nu[f(X_\infty)]\E_\nu[T] \sum_{n=t+1}^\infty \Prob_\nu(T>n).
\]
Hence,
\[
\E_\nu[f(X_t)] - \E_\nu[f(X_\infty)] \leq \E_\nu[f(X_\infty)] \sum_{n=t+1}^\infty \Prob_\nu(T>n). \qedhere
\]
\end{proof}

\begin{proof}[Proof of Theorem \ref{thm:strongrandom}]
Since the operator $P$ on $L^2(\pi)$ is self-adjoint and its spectrum is a subset of $[0,1]$, we have for each $f \in L^2(\pi)$ that the sequence $\langle P^t f, f \rangle_\pi$ is nonnegative and nonincreasing. This can be seen by writing
\[
\langle P^t f, f \rangle_\pi = \int_{[0,1]} \lambda^t \psi_f(d\lambda)
\]
where $\psi_f$ is the spectral measure associated with $f$ \cite[Section VII.2]{RS72}. Alternatively, it can be shown by a series of substitutions for $g$ in the inequalities $\langle Pg,g \rangle_\pi \geq 0$ and $\langle Pg,Pg \rangle_\pi \leq \langle g,g \rangle_\pi$.

By Lemma \ref{lemma:stationary}, for $S \subset \X$,
\[
\pi(S) = \frac{1}{\E_\nu[T]} \sum_{n=0}^\infty \Prob_\nu(X_n\in S,\, T > n) \geq \frac{1}{\E_\nu[T]} \Prob_\nu(X_0\in S,\, T > 0) = \frac{\nu(S)}{\E_\nu[T]}.
\]
Thus the Radon--Nikodym derivative of $\nu$ with respect to $\pi$ satisfies $d\nu / d\pi \leq \E_\nu[T]$ $\pi$-almost everywhere. In particular, $d\nu / d\pi \in L^2(\pi)$.

Fix $t\geq 0$. For any $f\in L^2(\pi)$,
\[
\int_\X f(x)P^t(\nu,dx) = \int_\X (P^t f)(x) \frac{d\nu}{d\pi}(x)\pi(dx) = \int_\X f(x) \left( P^t \frac{d\nu}{d\pi} \right)(x)\pi(dx),
\]
where the second equality used reversibility of $P$. This means precisely that $P^t(\nu,\cdot)$ is absolutely continuous with respect to $\pi$, with Radon--Nikodym derivative
\begin{equation}
\label{eq:Radon--Nikodym}
\frac{dP^t(\nu,\cdot)}{d\pi}(x) = \left( P^t \frac{d\nu}{d\pi} \right)(x).
\end{equation}
We have
\begin{equation}
\label{eq:crossterm}
\bigg\langle P^t \frac{d\nu}{d\pi},\, 1 \bigg\rangle_\pi = \bigg\langle \frac{d\nu}{d\pi},\, P^t 1 \bigg\rangle_\pi = \bigg\langle \frac{d\nu}{d\pi},\, 1 \bigg\rangle_\pi = 1.
\end{equation}
Using \eqref{eq:Radon--Nikodym} in the definition of $L^2(\pi)$ distance, and then \eqref{eq:crossterm},
\[
\begin{split}
\|P^t(\nu,\cdot) - \pi\|_{L^2(\pi)}^2 &= \bigg\langle P^t \frac{d\nu}{d\pi} - 1,\, P^t \frac{d\nu}{d\pi} - 1 \bigg\rangle_\pi = \bigg\langle P^t \frac{d\nu}{d\pi},\, P^t \frac{d\nu}{d\pi} \bigg\rangle_\pi - 1 \\
&= \bigg\langle P^{2t} \frac{d\nu}{d\pi},\, \frac{d\nu}{d\pi} \bigg\rangle_\pi - 1.
\end{split}
\]

Consider the sequence
\[
\bigg\langle P^t \frac{d\nu}{d\pi},\, \frac{d\nu}{d\pi} \bigg\rangle_\pi = \int_\X \left( P^t \frac{d\nu}{d\pi} \right)(x) \nu(dx) = \E_\nu \left[ \frac{d\nu}{d\pi}(X_t) \right],
\]
which is nonincreasing by the discussion at the start of the proof. Let
\[
a = \lim_{t\to\infty} \E_\nu \left[ \frac{d\nu}{d\pi}(X_t) \right].
\]
The following argument shows that $a=1$. For any $t\geq 0$,
\[
\E_\pi \left[ \frac{d\nu}{d\pi}(X_t) \right] = \E_\pi \left[ \frac{d\nu}{d\pi}(X_0) \right] = \int_\X \frac{d\nu}{d\pi}(x) \pi(dx) = \int_\X \nu(dx) = 1.
\]
As well,
\begin{equation}
\label{eq:to-take-limit}
\E_\pi \left[ \frac{d\nu}{d\pi}(X_t) \right] = \E_\pi \left[ \frac{d\nu}{d\pi}(X_t),\, T>t \right] + \sum_{s=0}^\infty \Prob_\pi(T=s) \E_\nu \left[ \frac{d\nu}{d\pi}(X_{t-s}) \right]
\end{equation}
where $\E_\nu \left[ \frac{d\nu}{d\pi}(X_{t-s}) \right]$ is taken to be zero when $t-s < 0$. Take the limit as $t\to\infty$ of \eqref{eq:to-take-limit}. The left side is $1$. For the first part of the right side,
\[
\lim_{t\to\infty} \E_\pi \left[ \frac{d\nu}{d\pi}(X_t),\, T>t \right] \leq \lim_{t\to\infty} \E_\nu[T] \Prob_\pi(T>t) = 0.
\]
For the second part of the right side, use dominated convergence to interchange the sum and the limit. This is legal because for all $t$,
\[
\Prob_\pi(T=s) \E_\nu \left[ \frac{d\nu}{d\pi}(X_{t-s}) \right] \leq \E_\nu[T] \Prob_\pi(T=s),
\]
and
\[
\sum_{s=0}^\infty \E_\nu[T] \Prob_\pi(T=s) = \E_\nu[T] < \infty.
\]
Hence
\[
\begin{split}
\lim_{t\to\infty} \sum_{s=0}^\infty \Prob_\pi(T=s) \E_\nu \left[ \frac{d\nu}{d\pi}(X_{t-s}) \right] &= \sum_{s=0}^\infty \lim_{t\to\infty} \Prob_\pi(T=s) \E_\nu \left[ \frac{d\nu}{d\pi}(X_{t-s}) \right] \\
&= \sum_{s=0}^\infty \Prob_\pi(T=s) \cdot a = a.
\end{split}
\]
So taking the limit as $t\to\infty$ of \eqref{eq:to-take-limit} yields $1 = 0 + a$.

Lemma \ref{lemma:core} with $f = d\nu / d\pi$ gives
\[
\bigg\langle P^t \frac{d\nu}{d\pi},\, \frac{d\nu}{d\pi} \bigg\rangle_\pi - 1 \leq \sum_{n=t+1}^\infty \Prob_\nu(T>n).
\]
It follows that
\[
\|P^t(\nu,\cdot) - \pi\|_{L^2(\pi)}^2 = \bigg\langle P^{2t} \frac{d\nu}{d\pi},\, \frac{d\nu}{d\pi} \bigg\rangle_\pi - 1 \leq \sum_{n=2t+1}^\infty \Prob_\nu(T>n). \qedhere
\]
\end{proof}

\begin{proof}[Proof of Theorem \ref{thm:TV-strong}]
We first compute from Theorem \ref{thm:strongrandom} that
\[
4 \|P^t(\nu,\cdot) - \pi\|_\TV^2 \leq \|P^t(\nu,\cdot) - \pi\|_{L^2(\pi)}^2 \leq \sum_{n = 2t+1}^\infty \Prob_\nu(T > n) \leq \sum_{n = 2t+1}^\infty A(\nu) \rho^n.
\]
Summing the geometric series gives
\[
\sum_{n = 2t+1}^\infty A(\nu) \rho^n = \frac{A(\nu) \rho}{1 - \rho} \cdot \rho^{2t} = 4D^2 \rho^{2t}
\]
and so we have
\begin{equation}
\label{eq:nu-TV}
\|P^t(\nu,\cdot) - \pi\|_\TV \leq D \rho^t.
\end{equation}

Next we show that for any $x \in \X$,
\begin{equation} \label{eq:first-entrance}
\|P^t(x,\cdot) - \pi\|_\TV \leq \sum_{n=0}^t \Prob_x(T = n) \|P^{t-n}(\nu,\cdot) - \pi\|_\TV + \Prob_x(T > t).
\end{equation}
The proof of \eqref{eq:first-entrance} begins with the definition
\[
\|P^t(x,\cdot) - \pi\|_\TV = \sup_{S \subset \X} \Big[ P^t(x,S) - \pi(S) \Big],
\]
which is equivalent to our definition in Section \ref{sec:overview}. Because $T$ is a strong random time with measure $\nu$,
\begin{align*}
P^t(x,S) &= \sum_{n=0}^t \Prob_x(T = n) P^{t-n}(\nu,S) + \Prob_x(T > t) \Prob_x(X_t\in S \mid T > t), \\
\pi(S) &= \sum_{n=0}^t \Prob_x(T = n) \pi(S) + \Prob_x(T > t) \pi(S).
\end{align*}
Hence,
\[
P^t(x,S) - \pi(S) \leq \sum_{n=0}^t \Prob_x(T = n) \sup_{S'\subset\X} \Big[ P^{t-n}(\nu,S') - \pi(S') \Big] + \Prob_x(T > t).
\]
Taking the supremum over all $S \subset \X$ gives \eqref{eq:first-entrance}.

We now combine \eqref{eq:nu-TV} with \eqref{eq:first-entrance}:
\[
\|P^t(x,\cdot) - \pi\|_\TV \leq \sum_{n=0}^t \Prob_x(T = n) D \rho^{t-n} + \Prob_x(T > t).
\]
Summation by parts implies that
\[
\begin{split}
\sum_{n=0}^t \Prob_x(T = n) D \rho^{t-n} &= D \left[ \rho^t - \Prob_x(T > t) + \sum_{n=0}^{t-1} \Prob_x(T > n) (\rho^{t-n-1} - \rho^{t-n}) \right] \\
&\leq D \left[ \rho^t - \Prob_x(T > t) + \sum_{n=0}^{t-1} A(x) \rho^n \cdot \frac{1-\rho}{\rho}\rho^{t-n} \right] \\
&= D \left[ \rho^t - \Prob_x(T > t) + \frac{1-\rho}{\rho} A(x) t \rho^t \right].
\end{split}
\]
Therefore,
\[
\|P^t(x,\cdot) - \pi\|_\TV \leq D \rho^t + \frac{1-\rho}{\rho} DA(x) t \rho^t + (1-D) \Prob_x(T > t)
\]
and, since $1-D$ may be either positive or negative, we write
\[
(1-D) \Prob_x(T > t) \leq (1-D)_+ A(x) \rho^t.
\]
In conclusion,
\[
\|P^t(x,\cdot) - \pi\|_\TV \leq \left( \frac{1-\rho}{\rho} DA(x) t + (1-D)_+ A(x) + D \right) \rho^t. \qedhere
\]
\end{proof}

\section{Tail bound}
\label{sec:tail}

In this section we prove Theorem \ref{thm:tail-bound}, which sharpens Theorem 4.1(i) of \cite{RT99} and generalizes it to the case $m > 1$. Combined with the proofs of Theorems \ref{thm:strongrandom} and \ref{thm:TV-strong} in Section \ref{sec:proofs-strongrandom}, this finishes the proof of Theorem \ref{thm:TV-drift-minorize}.

To start, let $(X_t)$ be a Markov chain on $\X$ and fix $C \subset \X$. The hitting time of $C$ is
\[
\tau_C = \min\{ t \geq 0 : X_t \in C \}.
\]
If the chain never reaches $C$ then we take $\tau_C = \infty$.

Suppose that $(X_t)$ has a drift function $V(x)$ with respect to $C$. The following well-known lemma (see e.g.\ \cite[Lemma 2.2]{LT96}) says that for each $x \in \X$, the value of $V(x)$ bounds an exponential moment of $\tau_C$ for the chain started at $x$.

\begin{lemma}
\label{lemma:TC}
Let $(X_t)$ be a Markov chain on $\X$ with transition kernel $P$. Given $C \subset \X$, suppose that the function $V: \X \to [1,\infty)$ satisfies $PV(x) \leq \lambda V(x)$ for $x \notin C$, where $\lambda < 1$ is fixed. Then
\[
\E_x[\lambda^{-\tau_C}] \leq V(x) \qquad \text{for all } x \in \X.
\]
\end{lemma}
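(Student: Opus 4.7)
The plan is to use the standard supermartingale argument from Foster--Lyapunov theory. Define the stopped process
\[
M_t = \lambda^{-(t \wedge \tau_C)} V(X_{t \wedge \tau_C}),
\]
which is nonnegative and satisfies $M_0 = V(x)$ under $\Prob_x$. I would first verify that $(M_t)$ is a supermartingale with respect to the natural filtration of $(X_t)$. On the event $\{t \geq \tau_C\}$ this is automatic since $M_{t+1} = M_t$. On the event $\{t < \tau_C\}$ we have $X_t \notin C$, so the drift hypothesis gives $\E_x[V(X_{t+1}) \mid X_0,\ldots,X_t] \leq \lambda V(X_t)$, and hence
\[
\E_x[M_{t+1} \mid X_0,\ldots,X_t] \leq \lambda^{-(t+1)} \cdot \lambda V(X_t) = M_t.
\]

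Iterating yields $\E_x[M_t] \leq V(x)$ for every $t \geq 0$. Since $V \geq 1$ pointwise, we have $\lambda^{-(t \wedge \tau_C)} \leq M_t$, and therefore
\[
\E_x\bigl[\lambda^{-(t \wedge \tau_C)}\bigr] \leq V(x).
\]
The sequence $\lambda^{-(t \wedge \tau_C)}$ is nondecreasing in $t$ and converges pointwise to $\lambda^{-\tau_C}$ (with the convention $\lambda^{-\infty} = +\infty$ on the event $\{\tau_C = \infty\}$), so monotone convergence gives $\E_x[\lambda^{-\tau_C}] \leq V(x)$, as desired. The case $x \in C$ is trivial since then $\tau_C = 0$ and $\lambda^{-\tau_C} = 1 \leq V(x)$.

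There is not much of an obstacle here; the only points requiring a moment of care are (i) checking the supermartingale property separately on $\{t < \tau_C\}$ and $\{t \geq \tau_C\}$, so that the drift inequality is applied only where it is assumed to hold, and (ii) using monotone convergence rather than dominated convergence at the end, which avoids having to assume a priori that $\tau_C$ is finite almost surely. The conclusion $\E_x[\lambda^{-\tau_C}] \leq V(x) < \infty$ then itself implies $\Prob_x(\tau_C < \infty) = 1$ as a bonus.
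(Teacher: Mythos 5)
Your proof is correct and is essentially the paper's argument in disguise: the paper unrolls the supermartingale inequality by hand, deriving by induction the bound $V(x) \geq \lambda^{-t}\E_x[V(X_t),\,\tau_C > t] + \sum_{n=1}^t \lambda^{-n}\Prob_x(\tau_C = n)$, which is exactly $V(x)\geq\E_x[M_t]$ with $V\geq 1$ applied on the hitting event, and then sends $t\to\infty$. Your packaging via the stopped process $M_t$ and monotone convergence is cleaner and obtains $\Prob_x(\tau_C<\infty)=1$ as a byproduct rather than in a separate preliminary step, but the underlying estimate is the same.
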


We note that the function $V_0(x) = \E_x[\lambda^{-\tau_C}]$ itself satisfies $PV_0(x) = \lambda V_0(x)$ for $x \notin C$ and is therefore the minimal drift function for the given $C$ and $\lambda$. It immediately follows from Lemma \ref{lemma:TC} that $\E_\mu[\lambda^{-\tau_C}] \leq \mu(V)$ for all probability measures $\mu$ on $\X$ with $\mu(V) < \infty$.

\begin{proof}
If $x \in C$, then $\E_x[\lambda^{-\tau_C}] = 1 \leq V(x)$.

Assume that $x \notin C$. For all $t \geq 0$, we compute
\[
\begin{split}
\E_x[V(X_t),\, \tau_C > t] &\geq \lambda^{-1}\E_x[PV(X_t),\, \tau_C > t] \\
&= \lambda^{-1}\E_x[V(X_{t+1}),\, \tau_C > t] \\
&= \lambda^{-1}\Big( \E_x[V(X_{t+1}),\, \tau_C > t+1] + \E_x[V(X_{t+1}),\, \tau_C = t+1] \Big) \\
&\geq \lambda^{-1}\E_x[V(X_{t+1}),\, \tau_C > t+1] + \lambda^{-1}\Prob_x(\tau_C = t+1).
\end{split}
\]
Observing that $\E_x[V(X_0),\, \tau_C > 0] = V(x)$, it follows by induction that
\begin{equation}
\label{eq:V-induction}
V(x) \geq \lambda^{-t} \E_x[V(X_t),\, \tau_C > t] + \sum_{n=1}^t \lambda^{-n} \Prob_x(\tau_C = n) \qquad \text{for all } t \geq 0.
\end{equation}
If we keep only the first term from the right side of \eqref{eq:V-induction}, we see that
\[
V(x) \geq \lambda^{-t} \Prob_x(\tau_C > t) \geq \lambda^{-t} \Prob_x(\tau_C = \infty)
\]
and sending $t \to \infty$ implies that $\Prob_x(\tau_C = \infty) = 0$. Now, we return to \eqref{eq:V-induction} and this time keep only the sum on the right side. Sending $t \to \infty$ gives
\[
V(x) \geq \sum_{n=1}^\infty \lambda^{-n} \Prob_x(\tau_C = n) = \E_x[\lambda^{-\tau_C}]. \qedhere
\]
\end{proof}

In the next lemma, we bound
\[
P^m(\mu,V) = \int_\X V(x) P^m(\mu,dx) = \int_\X P^m V(x) \mu(dx) = \E_\mu[V(X_m)]
\]
for measures $\mu$ supported on $C$. When $m = 1$, the assumption that $PV(x) \leq K$ for all $x \in C$ implies immediately that $P(\mu,V) \leq K$.

\begin{lemma}
\label{lemma:B-bound}
Let $(X_t)$ be a Markov chain on $\X$ with transition kernel $P$. Suppose that $(X_t)$ has a drift function $V(x)$ with respect to $C \subset \X$, with parameters $\lambda < 1$ and $K < \infty$. Fix $m \geq 1$. For any probability measure $\mu$ supported on $C$, we have
\[
P^m(\mu,V) \leq \frac{1 - \lambda^m}{1 - \lambda} (K - \lambda) + \lambda^m.
\]
\end{lemma}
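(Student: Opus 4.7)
The plan is to reduce the two-case drift condition to a single pointwise inequality that can be iterated. First, I would verify
\[
PV(x) \leq \lambda V(x) + (K - \lambda)\mathbf{1}_C(x) \qquad \text{for all } x \in \X.
\]
On $C^c$ this is exactly Definition \ref{defn:drift}, while on $C$ it follows from $\lambda V(x) + (K - \lambda) \geq \lambda + (K - \lambda) = K \geq PV(x)$, using $V(x) \geq 1$.

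Iterating this pointwise bound via positivity of $P$ gives, for all $k \geq 1$,
\[
P^k V(x) \leq \lambda^k V(x) + (K - \lambda) \sum_{j=0}^{k-1} \lambda^{k-1-j}\, P^j \mathbf{1}_C(x).
\]
Integrating this directly against $\mu$ will not work, because $\mu(V)$ is not controlled by the hypotheses: even though $\mu$ is supported on $C$, $V$ may be unbounded on $C$. To get around this, I would handle the first application of $P$ separately: since $\mu$ is supported on $C$, we have $\mu(PV) \leq K$, i.e., $(P\mu)(V) \leq K$.

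Then applying the iterated inequality with $k = m - 1$ to the measure $P\mu$ and integrating yields
\[
P^m(\mu, V) = (P\mu)(P^{m-1}V) \leq \lambda^{m-1}(P\mu)(V) + (K - \lambda) \sum_{j=0}^{m-2} \lambda^{m-2-j}(P\mu)(P^j \mathbf{1}_C).
\]
Bounding $(P\mu)(V) \leq K$ and $(P\mu)(P^j \mathbf{1}_C) \leq 1$, then summing the geometric series $\sum_{j=0}^{m-2} \lambda^{m-2-j} = (1 - \lambda^{m-1})/(1 - \lambda)$, produces $\lambda^{m-1} K + (K - \lambda)(1 - \lambda^{m-1})/(1 - \lambda)$. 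A short algebraic simplification shows this equals $\lambda^m + (K - \lambda)(1 - \lambda^m)/(1 - \lambda)$, the desired bound. The only real obstacle is ensuring that $\mu(V)$ does not appear in the final answer, which the one-step-first trick handles cleanly.
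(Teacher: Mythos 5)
Your proposal is correct and takes essentially the same approach as the paper: isolate the first step, using that $\mu$ is supported on $C$ to get $(P\mu)(V) \leq K$, then iterate the global bound $PV \leq \lambda V + (K-\lambda)$. The paper packages the iteration as an induction on expectations rather than a closed-form pointwise sum, but the content is the same.
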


The upper bound in Lemma \ref{lemma:B-bound} is exactly the formula for $B$ in \eqref{eq:B-def}, and it evaluates to $K$ when $m = 1$.

\begin{proof}
We aim to show that
\[
\E_\mu[V(X_m)] \leq \frac{1 - \lambda^m}{1 - \lambda} (K - \lambda) + \lambda^m.
\]
When $m = 1$ we have seen that this follows directly from the condition $PV(x) \leq K$ for all $x \in C$. In general, we use that $PV(x) \leq \lambda V(x) + (K - \lambda)$ (which is implied by the drift condition) to compute
\[
\E_\mu[V(X_{m+1})] = \E_\mu[PV(X_m)] \leq \lambda \E_\mu[V(X_m)] + (K-\lambda).
\]
The desired bound then follows by induction.
\end{proof}

\begin{proof}[Proof of Theorem \ref{thm:tail-bound}]
The first statement in Theorem \ref{thm:tail-bound} is that $\Prob_\mu(T < \infty) = 1$ for all probability measures $\mu$ on $\X$. This follows from the bound
\begin{equation}
\label{eq:tail-bound-formula}
\Prob_\mu(T > t) \leq \mu(V)^r \rho^{t+1-m} \qquad \text{for all } t \geq 0
\end{equation}
that we will obtain when $\mu(V) < \infty$: from \eqref{eq:tail-bound-formula} we have $\Prob_x(T > t) \leq V(x)^r \rho^{t+1-m}$ for all $x \in \X$, hence $\Prob_x(T < \infty) = 1$, and then for any $\mu$ we can write
\[
\Prob_\mu(T < \infty) = \int_\X \Prob_x(T < \infty) \mu(dx) = 1.
\]
It therefore suffices to prove \eqref{eq:tail-bound-formula} along with the upper bound
\begin{equation}
\label{eq:nu-V}
\nu(V) \leq \frac{B - (1-\e)}{\e}.
\end{equation}

We first consider the case $\e = 1$. Here we have $T = \tau_C + m$ and $\nu(\cdot) = P^m(x,\cdot)$ for every $x \in C$. The bound \eqref{eq:nu-V} is simply $\nu(V) \leq B$, which holds by Lemma \ref{lemma:B-bound}. To verify \eqref{eq:tail-bound-formula}, fix a measure $\mu$ with $\mu(V) < \infty$. Using Markov's inequality followed by Lemma \ref{lemma:TC}, we obtain \eqref{eq:tail-bound-formula} with $\rho = \lambda$ and $r = 1$:
\[
\Prob_\mu(T>t) \leq \lambda^{t+1}\E_\mu[\lambda^{-T}] = \lambda^{t+1-m}\E_\mu[\lambda^{-\tau_C}] \leq \mu(V)\lambda^{t+1-m}.
\]

Assume now that $\e < 1$. For any probability measure $\mu$ supported on $C$, write
\begin{equation}
\label{eq:mu-bar-def}
P^m(\mu,\cdot) = \e \nu(\cdot) + (1-\e) \mubar(\cdot).
\end{equation}
The minorization property implies that $\mubar$ is a probability measure. By Lemma \ref{lemma:B-bound},
\begin{equation}
\label{eq:measure-decomp}
\e \nu(V) + (1-\e) \mubar(V) = P^m(\mu,V) \leq B.
\end{equation}
When we combine \eqref{eq:measure-decomp} with the lower bound $\mubar(V) \geq 1$, we get \eqref{eq:nu-V}. When we instead combine \eqref{eq:measure-decomp} with the lower bound $\nu(V) \geq 1$, we get
\begin{equation}
\label{eq:mubar-V}
\mubar(V) \leq \frac{B-\e}{1-\e}.
\end{equation}

Let $\D$ be the set of all measures $\mubar$ that appear in \eqref{eq:mu-bar-def} when $\mu$ varies over all the probability measures supported on $C$. We would like to prove by induction that
\begin{equation}
\label{eq:induct-goal}
\sup_{\mubar \in \D} \Prob_\mubar(T > t) \leq \alpha \rho^t \qquad \text{for all } t \geq 0,
\end{equation}
for constants $\alpha < \infty$ and $\rho < 1$ whose values we will determine later. (In the end, the value of $\rho$ will be given by \eqref{eq:rho-def}.)

Suppose that $t \geq m$ and we have already proved \eqref{eq:induct-goal} for all $t' < t$. Write
\[
\Prob_\mubar(T > t) = \sum_{s=0}^{t-m} \Prob_\mubar(T > t \mid \tau_C = s) \Prob_\mubar(\tau_C = s) + \Prob_\mubar(\tau_C > t-m).
\]
For $s \leq t-m$, define $\mubar_s \in \D$ to satisfy
\[
\Prob_\mubar(X_{s+m}\in\cdot \mid \tau_C = s) = \e\nu(\cdot) + (1-\e)\mubar_s(\cdot).
\]
By the construction of $T$ and the inductive hypothesis,
\[
\Prob_\mubar(T > t \mid \tau_C = s) = (1-\e) \Prob_{\mubar_s}(T > t-s-m) \leq (1-\e) \alpha \rho^{t-s-m}.
\]
Therefore,
\[
\Prob_\mubar(T > t) \leq \sum_{s=0}^{t-m} (1-\e)\alpha \rho^{t-s-m} \Prob_\mubar(\tau_C = s) + \Prob_\mubar(\tau_C > t-m).
\]
We would like to argue next that
\[
\Prob_\mubar(\tau_C > t-m) = \sum_{s=t-m+1}^\infty \Prob_\mubar(\tau_C = s) \leq \sum_{s=t-m+1}^\infty (1-\e)\alpha \rho^{t-s-m} \Prob_\mubar(\tau_C = s)
\]
and this inequality will hold as long as
\begin{equation}
\label{eq:induct-cond1}
(1-\e) \alpha \rho^{-1} \geq 1.
\end{equation}
If \eqref{eq:induct-cond1} is true, then
\[
\Prob_\mubar(T > t) \leq \sum_{s=0}^\infty (1-\e)\alpha \rho^{t-s-m} \Prob_\mubar(\tau_C = s) = (1-\e) \alpha \E_\mubar[\rho^{-\tau_C - m}] \cdot \rho^t.
\]
If in addition we have
\begin{equation}
\label{eq:induct-cond2}
(1-\e) \E_\mubar[\rho^{-\tau_C - m}] \leq 1
\end{equation}
then we get $\Prob_\mubar(T > t) \leq \alpha \rho^t$ and the induction is complete.

Assume that \eqref{eq:induct-cond1} and \eqref{eq:induct-cond2} hold. By combining them, we see that
\[
\alpha \rho^{-1} \geq \E_\mubar[\rho^{-\tau_C - m}] \geq \rho^{-m}.
\]
The base case $t \leq m-1$ of \eqref{eq:induct-goal} is proved by observing that $\alpha \rho^t \geq \alpha \rho^{m-1} \geq 1$. Therefore, for $\alpha$ and $\rho < 1$ satisfying \eqref{eq:induct-cond1} and \eqref{eq:induct-cond2}, we have finished the inductive proof of \eqref{eq:induct-goal}. In fact, once we have found $\rho < 1$ satisfying \eqref{eq:induct-cond2}, we can let $\alpha = \rho / (1-\e)$ so that \eqref{eq:induct-cond1} holds.

For values of $\rho$ less than $\lambda$, we cannot bound the exponential moment in \eqref{eq:induct-cond2}. For $\lambda \leq \rho \leq 1$, we can set $r = \log \rho / \log \lambda \in [0,1]$ and write
\[
\E_\mubar[\rho^{-\tau_C - m}] = \rho^{-m} \E_\mubar[\lambda^{-\tau_C r}] \leq \rho^{-m} \E_\mubar[\lambda^{-\tau_C}]^r
\]
using Jensen's inequality (the function $x \mapsto x^r$ is concave). Then, Lemma \ref{lemma:TC} and \eqref{eq:mubar-V} imply that
\begin{equation}
\label{eq:rho-function}
\E_\mubar[\rho^{-\tau_C - m}] \leq \rho^{-m} \left( \frac{B-\e}{1-\e} \right)^r.
\end{equation}
The right side of \eqref{eq:rho-function} is decreasing in $\rho$ and evaluates to $1$ when $\rho = 1$. Setting this quantity equal to $1/(1-\e)$ and solving for $\rho$ yields the solution
\[
\rho_0 = \exp\left( \frac{-\log (1-\e) \log \lambda}{-m \log \lambda + \log(B-\e) - \log(1-\e)} \right) < 1.
\]
Thus, if we set $\rho = \lambda \vee \rho_0$ (matching the definition in \eqref{eq:rho-def}) then $\rho < 1$ and we have proved \eqref{eq:induct-cond2}. Letting $\alpha = \rho/(1-\e)$, we conclude that
\begin{equation}
\label{eq:induct-done}
\sup_{\mubar \in \D} \Prob_\mubar(T > t) \leq \frac{1}{1-\e} \rho^{t+1} \qquad \text{for all } t \geq 0.
\end{equation}

We are now ready to prove \eqref{eq:tail-bound-formula}. Let $\mu$ be a probability measure on $\X$ with $\mu(V) < \infty$. For $t \leq m-1$, the right side of \eqref{eq:tail-bound-formula} is at least $1$ and so the statement is trivial. For $t \geq m$, we repeat the argument from the induction. Write
\[
\Prob_\mu(T > t) = \sum_{s=0}^{t-m} \Prob_\mu(T > t \mid \tau_C = s) \Prob_\mu(\tau_C = s) + \Prob_\mu(\tau_C > t-m).
\]
For $s \leq t-m$, define $\mu_s \in \D$ to satisfy
\[
\Prob_\mu(X_{s+m}\in\cdot \mid \tau_C = s) = \e\nu(\cdot) + (1-\e)\mu_s(\cdot).
\]
Then, using \eqref{eq:induct-done},
\[
\Prob_\mu(T > t \mid \tau_C = s) = (1-\e) \Prob_{\mu_s}(T > t-s-m) \leq \rho^{t-s-m+1}
\]
and it follows that
\[
\Prob_\mu(T > t) \leq \sum_{s=0}^{t-m} \rho^{t-s-m+1} \Prob_\mu(\tau_C = s) + \Prob_\mu(\tau_C > t-m).
\]
Since
\[
\Prob_\mu(\tau_C > t-m) = \sum_{s=t-m+1}^\infty \Prob_\mu(\tau_C = s) \leq \sum_{s=t-m+1}^\infty \rho^{t-s-m+1} \Prob_\mu(\tau_C = s),
\]
we have
\[
\Prob_\mu(T > t) \leq \sum_{s=0}^\infty \rho^{t-s-m+1} \Prob_\mu(\tau_C = s) = \rho^{t-m+1} \E_\mu[\rho^{-\tau_C}].
\]
Again, Jensen's inequality and Lemma \ref{lemma:TC} give
\[
\E_\mu[\rho^{-\tau_C}] = \E_\mu[\lambda^{-\tau_C r}] \leq \E_\mu[\lambda^{-\tau_C}]^r \leq \mu(V)^r
\]
and we conclude as desired that
\[
\Prob_\mu(T > t) \leq \mu(V)^r \rho^{t-m+1}. \qedhere
\]
\end{proof}

\section{$V$-norm convergence}
\label{sec:V-norm}

Let $(X_t)$ be a Markov chain on $\X$ with transition kernel $P$ and stationary distribution $\pi$. Suppose that $(X_t)$ has a drift function $V(x)$ with respect to a small set. It is a well-established principle (see e.g.\ \cite{MT94}) that any upper bound on $\|P^t(x,\cdot) - \pi\|_\TV$ can easily be strengthened to an upper bound on the $V$-norm distance
\[
\|P^t(x,\cdot) - \pi\|_V = \sup_{|f| \leq V} |P^t f(x) - \pi(f)|.
\]

The goal of this section is to carry out the strengthening process for Theorem \ref{thm:TV-drift-minorize}. We will prove the following result. Note that the bound \eqref{eq:TV-reprise} below is simply a restatement of Theorem \ref{thm:TV-drift-minorize}.

\begin{thm}
\label{thm:V-norm}
Let $(X_t)$ be a Markov chain on $\X$ with transition kernel $P$. Assume that $(X_t)$ is reversible with respect to the probability measure $\pi$ and that it has nonnegative eigenvalues. Also assume that $(X_t)$ has a drift function with respect to a small set $C \subset \X$. Let $V(x)$, $\lambda < 1$, $K < \infty$ be the data associated with the drift function, and let $\nu$, $m \geq 1$, $\e > 0$ be the data associated with the small set.

Define
\[
B = \frac{1 - \lambda^m}{1 - \lambda} (K - \lambda) + \lambda^m.
\]
If $\e = 1$, then set $\rho = \lambda$ and $r = 1$. If $\e < 1$, then set
\begin{align*}
\rho &= \lambda \vee \exp\left( \frac{-\log(1-\e) \log \lambda}{-m \log \lambda + \log(B - \e) - \log(1-\e)} \right), \\
r &= \frac{\log \rho}{\log \lambda}.
\end{align*}
Let
\[
D = \frac{1}{2} \sqrt{ \left[ \frac{B - (1-\e)}{\e} \right]^r \frac{\rho^{2-m}}{1-\rho} }
\]
and define the functions
\begin{align*}
F_0(x) &= (1-D)_+ \rho^{1-m} V(x)^r + D, \\
F_1(x) &= (1-\rho) \rho^{-m} D V(x)^r.
\end{align*}
Given these definitions, we have
\begin{equation}
\label{eq:TV-reprise}
\|P^t(x,\cdot) - \pi\|_\TV \leq [F_1(x)t + F_0(x)] \rho^t \qquad \text{for all $x \in \X$, $t \geq 0$.}
\end{equation}

Next, let
\[
G_0(x) = V(x) + \frac{K-\lambda}{1-\lambda}.
\]
If $\rho = \lambda$, define
\begin{align*}
G_1(x) &= K \lambda^{-1} [2F_0(x) - F_1(x)], \\
G_2(x) &= K \lambda^{-1} F_1(x),
\end{align*}
while if $\rho > \lambda$, define instead
\begin{align*}
H_0(x) &= 2K \left[ \frac{F_0(x)}{\rho - \lambda} - \frac{\rho F_1(x)}{(\rho - \lambda)^2} \right], \\
H_1(x) &= \frac{2K F_1(x)}{\rho - \lambda}.
\end{align*}
We then have the following bounds. If $\rho = \lambda$, then
\[
\|P^t(x,\cdot) - \pi\|_V \leq [G_2(x) t^2 + G_1(x) t + G_0(x)] \lambda^t \qquad \text{for all $x \in \X$, $t \geq 0$.}
\]
If $\rho > \lambda$, then
\[
\|P^t(x,\cdot) - \pi\|_V \leq [H_1(x)t + H_0(x)] \rho^t + [G_0(x) - H_0(x)] \lambda^t \qquad \text{for all $x \in \X$, $t \geq 0$.}
\]
\end{thm}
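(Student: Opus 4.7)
The bound \eqref{eq:TV-reprise} is a restatement of Theorem \ref{thm:TV-drift-minorize}, so the task is the $V$-norm estimate. Writing $u_t = \|P^t(x,\cdot) - \pi\|_V$ and $v_t = \|P^t(x,\cdot) - \pi\|_\TV$, the plan is to prove the one-step recursion
\[
u_t \leq \lambda u_{t-1} + 2K v_{t-1} \qquad \text{for all } t \geq 1,
\]
together with the base case $u_0 \leq G_0(x)$, and then iterate and substitute \eqref{eq:TV-reprise}.

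For the recursion, we fix $f$ with $|f| \leq V$, set $g = Pf$, and use $\pi P = \pi$ to write $P^t f(x) - \pi(f) = \int g\, d(P^{t-1}(x,\cdot) - \pi)$. The drift condition forces the decomposition $g = g\1_C + g\1_{C^c}$ to satisfy $|g\1_C| \leq K$ (a bounded function) and $|g\1_{C^c}| \leq \lambda V$. The bounded piece contributes at most $2K v_{t-1}$ by the standard estimate for mean-zero signed integrals against bounded functions, while the second contributes at most $\lambda u_{t-1}$ since $g\1_{C^c}/\lambda$ is dominated by $V$. Taking the supremum over $|f| \leq V$ yields the recursion. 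For the base case, $u_0 \leq V(x) + \pi(V) \leq G_0(x)$, where $\pi(V) \leq (K-\lambda)/(1-\lambda)$ follows by integrating $PV \leq \lambda V + (K-\lambda)\1_C$ against $\pi$.

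Iterating gives $u_t \leq \lambda^t G_0(x) + 2K\sum_{s=0}^{t-1} \lambda^{t-1-s} v_s$. Substituting \eqref{eq:TV-reprise} and setting $r = \rho/\lambda$, the right-hand side becomes $\lambda^t G_0(x)$ plus $2K\lambda^{t-1}$ times a linear combination of $\sum_{s=0}^{t-1} r^s$ and $\sum_{s=0}^{t-1} s r^s$. When $\rho = \lambda$ these sums equal $t$ and $t(t-1)/2$, and collecting powers of $t$ produces $G_2(x) = K\lambda^{-1}F_1(x)$ and $G_1(x) = K\lambda^{-1}[2F_0(x)-F_1(x)]$ exactly as claimed. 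When $\rho > \lambda$, the standard closed forms generate factors of $(\rho^t-\lambda^t)/(\rho-\lambda)$ and $t\rho^t/(\rho-\lambda)$; rearranging separates the output into coefficients of $\rho^t$, $t\rho^t$, and $\lambda^t$, which assemble into $[H_1(x)t + H_0(x)]\rho^t + [G_0(x) - H_0(x)]\lambda^t$ with $H_0, H_1$ taking the stated forms. The only real work is the algebraic bookkeeping in the $\rho > \lambda$ case: after expanding $\lambda^{t-1}\sum_{s=0}^{t-1} s r^s$ and grouping like powers, one must verify that the $\lambda^t$ piece from the sum combines with $\lambda^t G_0(x)$ from the base case to give precisely $[G_0(x) - H_0(x)]\lambda^t$, which then forces the stated identifications of $H_0$ and $H_1$.
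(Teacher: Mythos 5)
Your proposal is correct, and the final iterated inequality
\[
\|P^t(x,\cdot)-\pi\|_V \leq 2K \sum_{n=1}^t \lambda^{n-1} \|P^{t-n}(x,\cdot)-\pi\|_\TV + [V(x)+\pi(V)] \lambda^t
\]
is exactly what the paper derives as Lemma \ref{lemma:TV-to-V-norm} (specialized to $\mu = \delta_x$, $\mu' = \pi$), after which the algebraic bookkeeping you sketch is precisely what the paper does. The difference is in how this inequality is established. You iterate a one-step recursion $u_t \leq \lambda u_{t-1} + 2K v_{t-1}$ obtained by writing $P^t f(x) - \pi(f) = \int Pf\, d(P^{t-1}(x,\cdot)-\pi)$, splitting $Pf = (Pf)\1_C + (Pf)\1_{C^c}$, and using $|Pf| \leq PV \leq K\1_C + \lambda V\1_{C^c}$ to control the two pieces by TV-distance and $V$-norm-distance respectively. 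The paper instead decomposes $\E_\mu[f(X_t)]$ by the time of the last visit to $C$ before $t$, estimating the contributions with the drift-function moment bounds $\E_\mu[V(X_t),\, \tau_C \geq t] \leq \mu(V)\lambda^t$ and $\E_x[V(X_t),\, \tau_C^+ \geq t] \leq K\lambda^{t-1}$ for $x \in C$. Your one-step operator decomposition is genuinely different and somewhat shorter: it avoids introducing the hitting times $\tau_C, \tau_C^+$ and the event decomposition, and it works verbatim for any pair of initial measures with finite $V$-moment, so it actually proves the full strength of Lemma \ref{lemma:TV-to-V-norm}.

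One technicality you slide past: the claim that $\pi(V) \leq (K-\lambda)/(1-\lambda)$ ``follows by integrating $PV \leq \lambda V + (K-\lambda)\1_C$ against $\pi$'' implicitly assumes $\pi(V) < \infty$, which must be established first. The paper's Lemma \ref{lemma:pi-V} handles this with a truncation argument before performing the subtraction. Your recursion also quietly uses $u_{t-1} < \infty$ to move the $\lambda u_{t-1}$ term to one side, which again requires $\pi(V) < \infty$ for the base case. This is a standard fact, but a complete writeup should prove or cite it.
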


The first step in the proof of Theorem \ref{thm:V-norm} is a standard upper bound on $\pi(V)$.

\begin{lemma}
\label{lemma:pi-V}
Let $(X_t)$ be a Markov chain on $\X$ with transition kernel $P$ and stationary distribution $\pi$. Suppose that $(X_t)$ has a drift function $V(x)$ with respect to $C \subset \X$, with parameters $\lambda < 1$ and $K < \infty$. Then,
\begin{equation}
\label{eq:pi-V}
\pi(V) \leq \frac{K-\lambda}{1-\lambda} \pi(C).
\end{equation}
\end{lemma}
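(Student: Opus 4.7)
The plan is a short integration against $\pi$ after unifying the two cases of the drift. First, I would rewrite Definition \ref{defn:drift} as a single pointwise bound. Because $V \geq 1$, on $C$ we have $K \leq \lambda V(x) + (K - \lambda)$, so the drift condition can be consolidated to
\[
PV(x) \leq \lambda V(x) + (K - \lambda) \1_C(x) \qquad \text{for all } x \in \X.
\]

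Next, I would integrate against the stationary distribution $\pi$. By stationarity, $\pi(PV) = \pi(V)$, so integration yields $\pi(V) \leq \lambda \pi(V) + (K - \lambda) \pi(C)$, and rearranging delivers the claimed bound after dividing by $1 - \lambda > 0$.

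The one technical point I expect to be the main obstacle is that transposing $\lambda \pi(V)$ tacitly assumes $\pi(V) < \infty$. I would sidestep this by first running the argument on the bounded truncations $V_N := V \wedge N$: the pointwise inequality $PV_N \leq PV \leq \lambda V + (K - \lambda) \1_C$, combined with the identity $\pi(V_N) = \pi(PV_N)$ (which is legitimate because $V_N$ is bounded), produces
\[
\pi(V_N) \leq \lambda \pi(V) + (K - \lambda) \pi(C)
\]
for every $N$. Finiteness of $\pi(V)$ itself is a standard consequence of having a drift function toward a small set, obtainable, e.g., from Lemma \ref{lemma:TC} together with a Nummelin-style representation of $\pi$ in terms of excursion lengths between visits to $C$. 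With $\pi(V) < \infty$ in hand, monotone convergence as $N \to \infty$ upgrades the truncated estimate to $\pi(V) \leq \lambda \pi(V) + (K - \lambda) \pi(C)$, making the final rearrangement valid.
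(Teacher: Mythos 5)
Your consolidation of the drift condition into $PV(x) \leq \lambda V(x) + (K-\lambda)\1_C(x)$ and the target computation are both right, and you correctly identify $\pi(V) < \infty$ as the crux. However, your truncation $V_N := V \wedge N$ does not actually resolve that crux: the inequality it yields, $\pi(V_N) \leq \lambda\,\pi(V) + (K-\lambda)\pi(C)$, still has $\pi(V)$ on the right-hand side, so it is vacuous whenever $\pi(V) = \infty$ and gives you no leverage. You then delegate the finiteness step to ``Lemma~\ref{lemma:TC} together with a Nummelin-style representation of $\pi$,'' but this is a genuine gap for two reasons. First, the lemma's hypotheses only assume that $V$ is a drift function with respect to $C$; there is no minorization assumption on $C$, so the regeneration/excursion machinery you invoke is not available under these hypotheses. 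Second, even in the situations where $C$ is a small set, the paper deliberately avoids this route (it explicitly notes that Meyn--Tweedie obtain a more general result via ergodic theory and that the point of this lemma is a direct, self-contained proof of finiteness).

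The paper's truncation is different in a way that matters: it restricts the \emph{domain} rather than capping $V$, working with $S_N = \{x : V(x) \leq N\}$ and the quantities $\E_\pi[V(X_0);\, X_0 \in S_N]$. The key step is to use stationarity of $\pi$ to obtain the exchange identity
\[
\Prob_\pi(X_0 \in U_N,\, X_1 \in S_N) = \Prob_\pi(X_0 \in S_N,\, X_1 \in U_N),
\]
which (with $V \leq N$ on $S_N$) gives $\E_\pi[V(X_1);\, X_1 \in S_N] \leq \E_\pi[V(X_1);\, X_0 \in S_N] = \E_\pi[PV(X_0);\, X_0 \in S_N]$. Applying the pointwise drift bound and subtracting then produces $(1-\lambda)\E_\pi[V(X_0);\, X_0 \in S_N] \leq (K-\lambda)\pi(S_N \cap C)$, an inequality between \emph{finite} quantities on both sides; only at the very end does monotone convergence in $N$ deliver $\pi(V) < \infty$ together with the bound. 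To repair your argument you would need to replace the appeal to Nummelin-style results with a self-contained finiteness argument of this kind (or restrict the lemma's hypotheses), since otherwise the proof is circular in the paper's intended setting.
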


One might attempt to prove Lemma \ref{lemma:pi-V} by using stationarity of $\pi$ to write
\[
\pi(V) = \int_\X PV(x) \pi(dx).
\]
The drift condition then implies that
\[
\pi(V) \leq \lambda \pi(V) + (K-\lambda) \pi(C),
\]
which is equivalent to \eqref{eq:pi-V} as long as $\pi(V) < \infty$. Thus, showing that $\pi(V) < \infty$ is the main difficulty in the proof of Lemma \ref{lemma:pi-V}. In \cite[Theorem 14.3.7]{MT93}, Meyn and Tweedie derive a more general version of Lemma \ref{lemma:pi-V} using ergodic properties of the chain $(X_t)$. Our proof here is direct and shows finiteness by a truncation argument.

\begin{proof}
Fix a positive integer $N$. Let $S_N = \{x\in\X : V(x) \leq N\}$, and let $U_N = \X\setminus S_N$. We first observe that
\[
\begin{split}
\Prob_\pi(X_0 \in U_N,\, X_1 \in S_N) &= \Prob_\pi(X_1 \in S_N) - \Prob_\pi(X_0 \in S_N,\, X_1 \in S_N) \\
&= \Prob_\pi(X_0 \in S_N) - \Prob_\pi(X_0 \in S_N,\, X_1 \in S_N) \\
&= \Prob_\pi(X_0 \in S_N,\, X_1 \in U_N).
\end{split}
\]
Therefore,
\[
\begin{split}
\E_\pi[V(X_1),\, X_0 \in U_N,\, X_1 \in S_N] &\leq N \Prob_\pi(X_0 \in U_N,\, X_1 \in S_N) \\
&= N \Prob_\pi(X_0 \in S_N,\, X_1 \in U_N) \\
&\leq \E_\pi[V(X_1),\, X_0 \in S_N,\, X_1 \in U_N].
\end{split}
\]
It follows that
\begin{multline*}
\E_\pi[V(X_0),\, X_0 \in S_N] \\
\begin{aligned}
&= \E_\pi[V(X_1),\, X_1 \in S_N] \\
&= \E_\pi[V(X_1),\, X_0 \in S_N,\, X_1 \in S_N] + \E_\pi[V(X_1),\, X_0 \in U_N,\, X_1 \in S_N] \\
&\leq \E_\pi[V(X_1),\, X_0 \in S_N,\, X_1 \in S_N] + \E_\pi[V(X_1),\, X_0 \in S_N,\, X_1 \in U_N] \\
&= \E_\pi[V(X_1),\, X_0 \in S_N] \\
&= \E_\pi[PV(X_0),\, X_0 \in S_N].
\end{aligned}
\end{multline*}
The drift condition implies that $PV(x) \leq \lambda V(x) + (K-\lambda) \1\{x \in C\}$. Thus,
\[
\begin{split}
\E_\pi[V(X_0),\, X_0 \in S_N] &\leq \E_\pi[PV(X_0),\, X_0 \in S_N] \\
&\leq \lambda \E_\pi[V(X_0),\, X_0 \in S_N] + (K-\lambda) \pi(S_N \cap C).
\end{split}
\]
Since $\E_\pi[V(X_0),\, X_0 \in S_N] \leq N$, we can subtract to obtain
\[
(1-\lambda) \E_\pi[V(X_0),\, X_0 \in S_N] \leq (K-\lambda) \pi(S_N \cap C).
\]
Finally, by monotone convergence,
\[
(1-\lambda) \pi(V) = \lim_{N \to \infty} (1-\lambda) \E_\pi[V(X_0),\, X_0 \in S_N] \leq (K-\lambda) \pi(C). \qedhere
\]
\end{proof}

The following lemma will allow us to go from total variation convergence to convergence in $V$-norm.

\begin{lemma} \label{lemma:TV-to-V-norm}
Let $(X_t)$ be a Markov chain on $\X$ with transition kernel $P$. Suppose that $(X_t)$ has a drift function $V(x)$ with respect to $C \subset \X$, with parameters $\lambda < 1$ and $K < \infty$. Then, for any probability measures $\mu,\mu'$ on $\X$ with $\mu(V), \mu'(V) < \infty$,
\begin{multline*}
\|P^t(\mu,\cdot)-P^t(\mu',\cdot)\|_V \\
\leq 2K \sum_{n=1}^t \lambda^{n-1} \|P^{t-n}(\mu,\cdot)-P^{t-n}(\mu',\cdot)\|_\TV + [\mu(V)+\mu'(V)] \lambda^t \qquad \text{for all } t \geq 0.
\end{multline*}
\end{lemma}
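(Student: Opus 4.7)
The plan is to establish the bound by a one-step recursion that trades $V$-norm for a combination of $V$-norm (with a factor of $\lambda$) and total variation (with a factor of $2K$), then iterate. Concretely, I aim to show that for every $t \geq 1$,
\[
\|\mu P^t - \mu' P^t\|_V \leq \lambda \|\mu P^{t-1} - \mu' P^{t-1}\|_V + 2K \|\mu P^{t-1} - \mu' P^{t-1}\|_\TV.
\]
Unrolling this inequality down to $t = 0$ and using the trivial bound $\|\mu - \mu'\|_V \leq \mu(V) + \mu'(V)$ (which is immediate because $|f| \leq V$ implies $|\mu(f) - \mu'(f)| \leq \mu(V) + \mu'(V)$) yields exactly the claim.

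To prove the recursion, I would write $(\mu P^t - \mu' P^t)(f) = (\mu P^{t-1} - \mu' P^{t-1})(Pf)$ for any test function $f$ with $|f| \leq V$, and then decompose $Pf$ pointwise as $Pf = g_1 + g_2$, where $g_2(x) := (Pf(x) \wedge K) \vee (-K)$ is the truncation of $Pf$ to $[-K,K]$ and $g_1 := Pf - g_2$. Clearly $|g_2| \leq K$. The key step is to verify that $|g_1| \leq \lambda V$: wherever $|Pf(x)| \leq K$ we have $g_1(x) = 0$; wherever $|Pf(x)| > K$ the drift condition forces $x \notin C$ (since on $C$ we have $|Pf| \leq PV \leq K$), so $|Pf(x)| \leq PV(x) \leq \lambda V(x)$, and then $|g_1(x)| = |Pf(x)| - K \leq \lambda V(x)$. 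Applying the $V$-norm to $g_1/\lambda$ and the total variation bound $|(\nu - \nu')(g)| \leq 2\|g\|_\infty \|\nu - \nu'\|_\TV$ to $g_2$ gives the recursion after taking supremum over $|f| \leq V$.

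The main subtlety is in justifying the decomposition $Pf = g_1 + g_2$, and specifically the claim $|g_1| \leq \lambda V$: it crucially uses the fact that the drift function gives the \emph{uniform} bound $PV \leq K$ on $C$ together with the \emph{strict} geometric contraction $PV \leq \lambda V$ off $C$, which together allow the ``excess'' of $|Pf|$ above the constant $K$ to be absorbed into $\lambda V$. Once the recursion is in hand, the rest is a straightforward unrolling: setting $a_t = \|\mu P^t - \mu' P^t\|_V$ and $b_t = \|\mu P^t - \mu' P^t\|_\TV$, induction on $t$ gives $a_t \leq \lambda^t a_0 + 2K \sum_{n=1}^t \lambda^{n-1} b_{t-n}$, which combined with $a_0 \leq \mu(V) + \mu'(V)$ is the desired inequality. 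The assumption $\mu(V), \mu'(V) < \infty$ ensures that all quantities appearing are finite, so the induction is legitimate.
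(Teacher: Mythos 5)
Your proof is correct, and it takes a genuinely different route from the one in the paper. The paper's argument is probabilistic: it decomposes $P^t(\mu,f) = \E_\mu[f(X_t)]$ by conditioning on the last visit to $C$ before time $t$, using the events $E_s = \{X_s \in C,\ X_k \notin C \text{ for } s < k < t\}$, and then bounds $\E_x[V(X_n),\, \tau_C^+ \geq n] \leq K\lambda^{n-1}$ for $x \in C$ and $\E_\mu[V(X_t),\, \tau_C \geq t] \leq \mu(V)\lambda^t$ by drift iteration along excursions. Your argument is functional-analytic: you never look at the path, but instead split the test function $Pf$ pointwise into a truncated part $g_2$ with $|g_2| \leq K$ (controlled by total variation) and a remainder $g_1$ with $|g_1| \leq \lambda V$ (controlled by $V$-norm), which yields the one-step recursion $a_t \leq \lambda a_{t-1} + 2K b_{t-1}$. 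The crucial observation in your approach — that $|Pf(x)| > K$ forces $x \notin C$, at which point $|Pf(x)| \leq PV(x) \leq \lambda V(x)$ — plays the same role that the last-exit decomposition plays in the paper, and after unrolling the recursion the two proofs produce identical constants $K\lambda^{n-1}$ and $[\mu(V)+\mu'(V)]\lambda^t$. Your route is shorter and does not require defining $\tau_C^+$, $E_s$, or integrating against the variation measure $|P^{t-n}(\mu,\cdot) - P^{t-n}(\mu',\cdot)|$; the paper's route keeps the argument in the probabilistic idiom of the rest of the paper and makes the role of excursions from $C$ explicit. The one small check you rightly flag — that $\mu(V),\mu'(V) < \infty$ propagates to $P^t(\mu,V), P^t(\mu',V) < \infty$ via $PV \leq \lambda V + (K-\lambda)$ — keeps the induction well-posed.
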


\begin{proof}
We first show by induction that
\begin{equation}
\label{eq:mu-moment}
\E_\mu[V(X_t),\, \tau_C\geq t] \leq \mu(V)\lambda^t \qquad \text{for all } t \geq 0.
\end{equation}
The base case $t = 0$ is trivial. For $t\geq 1$,
\[
\begin{split}
\E_\mu[V(X_t),\, \tau_C\geq t] &= \E_\mu[PV(X_{t-1}),\, \tau_C\geq t] \\
&\leq \lambda \E_\mu[V(X_{t-1}),\, \tau_C\geq t] \\
&\leq \lambda \E_\mu[V(X_{t-1}),\, \tau_C\geq t-1],
\end{split}
\]
which finishes the inductive proof. If we let
\[
\tau_C^+ = \min\{t \geq 1 : X_t \in C\},
\]
then the same argument shows that every $x\in C$ satisfies
\begin{equation}
\label{eq:C-moment}
\E_x[V(X_t),\, \tau_C^+ \geq t] \leq K\lambda^{t-1} \qquad \text{for all } t \geq 1.
\end{equation}

Fix $t \geq 0$. For $s < t$, let $E_s$ be the event that $X_s \in C$ and $X_k \notin C$ for all $s < k < t$. We have
\begin{equation}
\label{eq:V-norm-specific}
\|P^t(\mu,\cdot)-P^t(\mu',\cdot)\|_V = \sup_{|f|\leq V} |P^t(\mu,f)-P^t(\mu',f)|,
\end{equation}
where
\begin{equation}
\label{eq:mu-f-decomp}
P^t(\mu,f) = \E_\mu[f(X_t)] = \sum_{n=1}^t \E_\mu[f(X_t),\, E_{t-n}] + \E_\mu[f(X_t),\, \tau_C \geq t]
\end{equation}
and
\begin{equation}
\label{eq:C-integral}
\E_\mu[f(X_t),\, E_{t-n}] = \int_C \E_x[f(X_n),\, \tau_C^+ \geq n] P^{t-n}(\mu,dx).
\end{equation}

We will use the fact that for probability measures $\eta,\eta'$ on $\X$,
\begin{equation}
\label{eq:TV-identity}
\|\eta - \eta'\|_\TV = \frac{1}{2} \int_\X |\eta - \eta'|(dx)
\end{equation}
where $|\eta - \eta'|$ is the variation of the signed measure $\eta - \eta'$ \cite{C13}. Let $|f| \leq V$. For $1 \leq n \leq t$, we use \eqref{eq:C-integral}, \eqref{eq:C-moment}, \eqref{eq:TV-identity} to compute
\begin{multline*}
\Big| \E_\mu[f(X_t),\, E_{t-n}] - \E_{\mu'}[f(X_t),\, E_{t-n}] \Big| \\
\begin{aligned}
&\leq \int_C \E_x[V(X_n),\, \tau_C^+ \geq n] \, |P^{t-n}(\mu,\cdot) - P^{t-n}(\mu',\cdot)|(dx) \\
&\leq K \lambda^{n-1} \int_C |P^{t-n}(\mu,\cdot) - P^{t-n}(\mu',\cdot)|(dx) \\
&\leq 2K \lambda^{n-1} \|P^{t-n}(\mu,\cdot) - P^{t-n}(\mu',\cdot)\|_\TV.
\end{aligned}
\end{multline*}
In addition, \eqref{eq:mu-moment} implies that
\[
\Big| \E_\mu[f(X_t),\, \tau_C \geq t] - \E_{\mu'}[f(X_t),\, \tau_C \geq t] \Big| \leq [\mu(V) + \mu'(V)] \lambda^t.
\]
Using \eqref{eq:mu-f-decomp}, $|P^t(\mu,f) - P^t(\mu',f)|$ is bounded above by
\[
2K \sum_{n=1}^t \lambda^{n-1} \|P^{t-n}(\mu,\cdot) - P^{t-n}(\mu',\cdot)\|_\TV + [\mu(V) + \mu'(V)] \lambda^t,
\]
so by \eqref{eq:V-norm-specific} the proof is complete.
\end{proof}

\begin{proof}[Proof of Theorem \ref{thm:V-norm}]
The idea is to combine the total variation bound \eqref{eq:TV-reprise}, which we already proved as Theorem \ref{thm:TV-drift-minorize}, with Lemma \ref{lemma:TV-to-V-norm}. In Lemma \ref{lemma:TV-to-V-norm} we take $\mu$ to be the $\delta$-measure at $x$ and $\mu' = \pi$. We compute
\[
\begin{split}
\|P^t(x,\cdot) - \pi\|_V &\leq 2K \sum_{n=1}^t \lambda^{n-1} \|P^{t-n}(x,\cdot) - \pi\|_\TV + [V(x) + \pi(V)] \lambda^t \\
&\leq 2K \sum_{n=1}^t \lambda^{n-1} [F_1(x)(t-n) + F_0(x)] \rho^{t-n} + \left[ V(x) + \frac{K-\lambda}{1-\lambda} \right] \lambda^t
\end{split}
\]
using \eqref{eq:TV-reprise} and Lemma \ref{lemma:pi-V}. The last term is $G_0(x) \lambda^t$. For the rest, write
\begin{multline*}
2K \sum_{n=1}^t \lambda^{n-1} [F_1(x)(t-n) + F_0(x)] \rho^{t-n} \\
= 2K F_1(x) \sum_{n=1}^t \lambda^{n-1} (t-n) \rho^{t-n} + 2K F_0(x) \sum_{n=1}^t \lambda^{n-1} \rho^{t-n}.
\end{multline*}
Routine computations show that when $\rho = \lambda$ we have
\[
\sum_{n=1}^t \lambda^{n-1} (t-n) \rho^{t-n} = \frac{t^2 - t}{2} \lambda^{t-1}, \qquad \sum_{n=1}^t \lambda^{n-1} \rho^{t-n} = t \lambda^{t-1}
\]
and when $\rho > \lambda$ we have
\[
\sum_{n=1}^t \lambda^{n-1} (t-n) \rho^{t-n} = \frac{t \rho^t}{\rho - \lambda} - \frac{\rho(\rho^t - \lambda^t)}{(\rho - \lambda)^2}, \qquad \sum_{n=1}^t \lambda^{n-1} \rho^{t-n} = \frac{\rho^t - \lambda^t}{\rho - \lambda}.
\]
Thus, when $\rho = \lambda$ we obtain
\[
\begin{split}
\|P^t(x,\cdot) - \pi\|_V &\leq 2KF_1(x) \cdot \frac{t^2-t}{2} \lambda^{t-1} + 2KF_0(x) \cdot t \lambda^{t-1} + G_0(x) \lambda^t \\
&= [G_2(x) t^2 + G_1(x) t + G_0(x)] \lambda^t
\end{split}
\]
and when $\rho > \lambda$ we obtain
\[
\begin{split}
\|P^t(x,\cdot) - \pi\|_V &\leq 2KF_1(x) \left[ \frac{t \rho^t}{\rho - \lambda} - \frac{\rho(\rho^t - \lambda^t)}{(\rho - \lambda)^2} \right] + 2KF_0(x) \cdot \frac{\rho^t - \lambda^t}{\rho - \lambda} + G_0(x) \lambda^t \\
&= [H_1(x)t + H_0(x)] \rho^t + [G_0(x) - H_0(x)] \lambda^t,
\end{split}
\]
which are the desired bounds.
\end{proof}

\section{Example: Nuclear pump Gibbs sampler}
\label{sec:example}

In this section we analyze a Markov chain previously considered by \cite{GS90,T94,MTY95,R95}. It is a single-component chain of a two-component deterministic scan Gibbs sampler, and as such is reversible with nonnegative eigenvalues \cite{B05}.

Our Gibbs sampler was proposed by Gelfand and Smith \cite{GS90} as a means of sampling from the posterior distribution associated with a Bayesian hierarchical model for the failure rate of pumps in a nuclear power plant. See \cite{T94} for a description of the model and a derivation of the transition rule below. We use the convention that a Gamma distribution $G(a,b)$ has density function $G_{a,b}(x) = \frac{b^a}{\Gamma(a)} x^{a-1} e^{-bx} \1\{x \geq 0\}$.

Let $(s_1,\ldots,s_{10};\, t_1,\ldots,t_{10})$ be the numerical data from \cite[Table 3]{GS90}. The Gibbs sampler $(\beta_t, S_t)$ on $\R^2$ is defined as follows. From the state $(\beta_t, S_t)$, sample $\beta_{t+1} \sim G(18.03, 1 + S_t)$. Then, independently for each $1 \leq j \leq 10$, sample $\theta_{t+1}^{(j)} \sim G(1.802 + s_j, \beta_{t+1} + t_j)$. Finally, set $S_{t+1} = \sum_{j=1}^{10} \theta_{t+1}^{(j)}$.\footnote{It is easily seen that this definition is equivalent to the $11$-component Gibbs sampler on $(\beta, \theta^{(1)}, \ldots, \theta^{(10)})$ which updates each coordinate in sequence. That is the definition provided in \cite{GS90,T94,MTY95,R95}.} The update order is
\[
\cdots \rightarrow (\beta_t, S_t) \rightarrow (\beta_{t+1}, S_t) \rightarrow (\beta_{t+1}, S_{t+1}) \rightarrow (\beta_{t+2}, S_{t+1}) \rightarrow \cdots
\]
From this description it follows that $(S_t)$ is itself a Markov chain which converges at the same rate as the full Gibbs sampler $(\beta_t, S_t)$. It is not hard to verify that any single-component chain defined from a two-component deterministic scan Gibbs sampler in this manner must be reversible with nonnegative eigenvalues.

Let the chain $(S_t)$ have transition kernel $P$ and stationary distribution $\pi$. Rosenthal \cite{R95} used the bivariate drift method to compute a numerical bound of the form \eqref{eq:geom-ergodic} on the distance from stationarity after $t$ steps. This bound is tightest when the chain is started at $S_0 = 6.5$. If we let
\[
\tau_\TV(0.01) = \min\{ t \geq 0 : \|P^t(6.5,\cdot) - \pi\|_\TV \leq 0.01 \},
\]
then \cite[Theorem 11]{R95} implies that $\tau_\TV(0.01) \leq 192$.

In Lemma \ref{lemma:explicit-parameters} we find explicit drift and minorization data for $(S_t)$ with the drift function $V(x) = 1 + (x - 6.5)^2$. This can be plugged into Theorems \ref{thm:TV-drift-minorize} and \ref{thm:V-norm} as well as \cite[Theorems 1.1 and 1.3]{B05}. Recall from Sections \ref{sec:related} and \ref{sec:nonreversible} that \cite[Theorem 1.3]{B05} also requires the chain to be reversible with nonnegative eigenvalues, while \cite[Theorem 1.1]{B05} does not require reversibility. We define
\[
\tau_V(0.02) = \min\{ t \geq 0 : \|P^t(6.5,\cdot) - \pi\|_V \leq 0.02 \},
\]
so that $\tau_\TV(0.01) \leq \tau_V(0.02)$, and compare the resulting upper bounds on $\tau_\TV(0.01)$ and $\tau_V(0.02)$ in Table \ref{table:bounds}.

\begin{table}
\centering
\begin{tabular}{| l | r | r |}
\multicolumn{1}{c}{Method} & \multicolumn{1}{c}{$\tau_\TV(0.01)$} & \multicolumn{1}{c}{$\tau_V(0.02)$} \\
\hline
\cite[Theorem 11]{R95} & $192$ & --- \\
\hline
\cite[Theorem 1.1]{B05} & --- & $1.0 \cdot 10^7$ \\
\hline
\cite[Theorem 1.3]{B05} & --- & $212$ \\
\hline
Theorem \ref{thm:TV-drift-minorize} & 83 & --- \\
\hline
Theorem \ref{thm:V-norm} & --- & 111 \\
\hline
\end{tabular}
\vspace{4pt}
\caption{Upper bounds on $\tau_\TV(0.01)$ and $\tau_V(0.02)$ by various methods.}
\label{table:bounds}
\end{table}

The bound of \cite[Theorem 1.1]{B05} is so large as to be impractical. The others are all relatively similar, with Theorems \ref{thm:TV-drift-minorize} and \ref{thm:V-norm} better by about a factor of $2$. None of these upper bounds is close to being sharp: a non-rigorous argument in \cite[Ch.~5]{J16} strongly indicates that $\tau_\TV(0.01) = 2$. This reinforces the principle that the method of drift and minorization only captures actual convergence rates for a limited class of Markov chains such as the monotone chains considered by \cite{LT96}. In all other circumstances, the best one can hope for is non-sharp bounds that are still small enough to be useful.

We now verify the drift and minorization condition for $(S_t)$. By design, we closely follow the proof of \cite[Theorem 11]{R95} so as to be sure that our improvements in convergence bounds are due to theoretical considerations rather than better estimates for this particular example. An expanded and illustrated version of the proof below can be found in \cite[Ch.~5]{J16}.

\begin{lemma}
\label{lemma:explicit-parameters}
The Markov chain $(S_t)$ has a drift function $V(x) = 1 + (x - 6.5)^2$ with respect to the small set $C = [4.74,8.50]$. The drift parameters are $\lambda = 0.61$, $K = 3.05$ and the minorization parameters are $m = 1$, $\e = 0.287$.
\end{lemma}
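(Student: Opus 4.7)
The plan is to verify the drift condition and the minorization separately, using the two-stage structure of the Gibbs sampler (first $\beta_{t+1}$ is drawn from $G(18.03, 1+S_t)$, then $S_{t+1}$ is an independent sum of Gammas whose rate parameters depend only on $\beta_{t+1}$, not on $S_t$). For the drift, I would expand
\[
PV(S) = 1 + \mathrm{Var}(S_{t+1} \mid S_t = S) + \big(\E[S_{t+1} \mid S_t = S] - 6.5\big)^2
\]
and evaluate each piece via the tower property. With $a_j = 1.802 + s_j$ and $\beta_{t+1} \mid S_t = S \sim G(18.03, 1+S)$, the conditional mean and variance of $S_{t+1}$ given $S$ reduce to linear combinations of the integrals $\E[(\beta_{t+1} + t_j)^{-k} \mid S_t = S]$ for $k = 1, 2$, each a closed-form function of $S$ (expressible through exponential integrals, but in practice just evaluated numerically on the data $(s_j, t_j)$). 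A crucial structural observation is that $\E[S_{t+1} \mid S_t = S]$ is bounded in $S$ (it tends to $\sum_j a_j / t_j$ as $S \to \infty$), while $V(S)$ grows quadratically, so $PV(S)/V(S) \to 0$ at $\pm\infty$; therefore it suffices to check $PV(S) \leq \lambda V(S)$ on a short interval outside $C$, and to bound $PV(S) \leq K$ for $S \in C$ by locating the maximum of $PV$ on $[4.74, 8.50]$.

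For the minorization with $m = 1$, I would use the standard observation that the law of $S_{t+1}$ given $\beta_{t+1}$ does not depend on $S_t$, so that the one-step transition density factors as
\[
p(S, s') = \int_0^\infty g_{1+S}(\beta)\, q(\beta, s')\, d\beta, \qquad g_b = G_{18.03,\, b},
\]
where $q(\beta, \cdot)$ is the law of a sum of independent $G(1.802 + s_j, \beta + t_j)$'s. For each fixed $\beta$, the function $b \mapsto \log g_b(\beta)$ is concave with derivative $18.03/b - \beta$, so the pointwise envelope $h(\beta) = \inf_{S \in C} g_{1+S}(\beta)$ is attained at an endpoint of $[5.74, 9.50]$: explicitly, $h(\beta) = \min(g_{5.74}(\beta), g_{9.50}(\beta))$, and the two candidates cross exactly once, at $\beta^* = 18.03 \log(9.50/5.74)/(9.50 - 5.74)$. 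Setting
\[
\e = \int_0^\infty h(\beta)\, d\beta = F_{5.74}(\beta^*) + 1 - F_{9.50}(\beta^*)
\]
(with $F_b$ the $G(18.03, b)$ c.d.f.) and $\nu(ds') = \e^{-1} \int_0^\infty h(\beta)\, q(\beta, ds')\, d\beta$, the inequality $P(S, \cdot) \geq \e \nu$ for all $S \in C$ follows by pulling the infimum inside the integral against $q(\beta, \cdot)$.

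The main obstacle is purely numerical: one must substitute the pump data $(s_j, t_j)$ from \cite[Table 3]{GS90} to verify that the stated constants $\lambda = 0.61$, $K = 3.05$, and $\e \geq 0.287$ are actually attained with the chosen $C = [4.74, 8.50]$. The drift side amounts to evaluating a handful of weighted Gamma moments and maximizing the resulting function of $S$ over short intervals, and the minorization side to a pair of incomplete Gamma evaluations at $\beta^*$; both are routine. As emphasized just before the lemma is stated, these choices deliberately mirror the estimates in \cite[Theorem 11]{R95}, so that any improvement in the final mixing bound can be attributed to the theory of Section \ref{sec:proofs-strongrandom} rather than to a sharper analysis of this specific chain.
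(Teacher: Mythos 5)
Your outline is correct and matches the paper's approach. The paper's own proof is deliberately terse: it computes $PV$ numerically, defines $C = \{x : PV(x) > \lambda V(x)\}$ and $K = \sup_{x\in C}PV(x)$, and states that $\e$ is obtained ``by the same method used in the proof of \cite[Theorem 11]{R95},'' which is precisely the factorization-through-$\beta_{t+1}$ and pointwise-envelope argument you reconstruct explicitly. The one step you omit but the paper makes central is an outer optimization: the whole recipe is run for each $\lambda \in \{0.01,0.02,\ldots,0.99\}$, the resulting rate $\rho(\lambda)$ from \eqref{eq:rho-def} is computed, and $\lambda = 0.61$ is selected because it minimizes $\rho$. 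That search explains where the specific parameter values come from; you take them as given and verify, which also discharges the lemma as stated. Your structural observations --- that $\E[S_{t+1}\mid S]$ and $\mathrm{Var}(S_{t+1}\mid S)$ stay bounded so $PV(S)/V(S)\to 0$, and that the envelope $\inf_{S\in C} g_{1+S}(\beta)$ reduces to a two-endpoint minimum by log-concavity in the rate parameter, giving $\e = F_{5.74}(\beta^*) + 1 - F_{9.50}(\beta^*)$ --- are details the paper delegates to \cite{R95} or leaves implicit, so your write-up is a more self-contained version of the same argument.
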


\begin{proof}
Rosenthal \cite{R95} observes that the mean of the stationary distribution $\pi$ for $(S_t)$ is roughly $6.5$, and for this reason chooses the bivariate drift function $W(x,y) = 1 + (x-6.5)^2 + (y-6.5)^2$. We use the corresponding univariate drift function $V(x) = 1 + (x-6.5)^2$.

For $\lambda = 0.01,0.02,\ldots,0.99$, we perform the following procedure.
\begin{enumerate}[label=\arabic*.]
\item Compute the function $PV(x)$ numerically and let $C$ be the set of $x$-values for which $PV(x) > \lambda V(x)$. Set $K = \sup_{x \in C} PV(x)$.
\item Take $m = 1$ in the minorization. Compute the minorization constant $\e$ by the same method used in the proof of \cite[Theorem 11]{R95}.
\item Use the formula \eqref{eq:rho-def} to compute the convergence rate $\rho$.
\end{enumerate}
In this way, for each possible value of $\lambda$ we obtain an upper bound $\rho = \rho(\lambda)$ on the convergence rate of the chain. The smallest of these $\rho$ values is $\rho(0.61) = 0.914$. Under the choice $\lambda = 0.61$, the set $C$ and the values of $K,\e$ are as in the statement of Lemma \ref{lemma:explicit-parameters}.
\end{proof}

\footnotesize{
\bibliographystyle{abbrvurl}
\bibliography{strongrandom}
}

\end{document}